\documentclass[11pt,reqno]{amsart}
\usepackage{amsmath,amssymb}
\usepackage{dsfont}
\usepackage{xcolor}
\usepackage{graphicx}
\usepackage{caption}
\usepackage{subcaption}
\usepackage{mathtools}
\usepackage{bm}
\usepackage[margin=1in]{geometry}
\usepackage[colorlinks=true,citecolor=magenta,linkcolor=blue]{hyperref}

\newtheorem{theorem}{Theorem}
\newtheorem{lemma}{Lemma}
\newtheorem{proposition}{Proposition}
\newtheoremstyle{boldremark}  
  {}                         
  {}                         
  {}                         
  {}                         
  {\bfseries}               
  {.}                       
  { }                       
  {}                        

\theoremstyle{boldremark} 
\newtheorem{remark}{Remark}
\newtheorem{definition}{Definition}

\makeatletter
\def\blfootnote{\xdef\@thefnmark{}\@footnotetext}
\makeatother

\author[J. Borsotti]{Jacopo Borsotti}
\address{Jacopo Borsotti \hfill\break
	Department of Mathematical, Physical and Computer Sciences \hfill\break
	University of Parma \hfill\break
	Parco Area delle Scienze 53/A, 43124 Parma, Italy \vspace*{2mm}}
\email{jacopo.borsotti@unipr.it \vspace*{5mm}}

\title[Hospitalizations and economic impact SIRS]{An SIRS model with hospitalizations: \\ economic impact by disease severity}

\begin{document}

\maketitle

\vspace*{-0.8cm}
\begin{abstract}
We introduce a two-timescale SIRS-type model in which a fraction $\theta$ of infected individuals experiences a severe course of the disease, requiring hospitalization. During hospitalization, these individuals do not contribute to further infections. We analyze the model's equilibria, perform a bifurcation analysis, and explore its two-timescale nature (using techniques from Geometric Singular Perturbation Theory). Our main result provides an explicit expression for the value of $\theta$ that maximizes the total number of hospitalized individuals for long times, revealing that this fraction can be lower than 1. This highlights the interesting effect that a severe disease, by necessitating widespread hospitalization, can indirectly suppress contagions and, consequently, reduce hospitalizations. Numerical simulations illustrate the growth in the number of hospitalizations for short times. The model can also be interpreted as a scenario where only a fraction $\theta$ of infected individuals develops symptoms and self-quarantines.
\end{abstract}

\tableofcontents

\textbf{Keywords:} Fast–slow system, Geometric singular perturbation theory, Entry–exit function, Epidemic model, Hospitalizations forecasting, Asymptomatic transmission

\section{Introduction}

\noindent The origins of mathematical epidemic modeling based on compartmental models trace back to the pioneering work of Kermack and McKendrick \cite{5}. Since that time, many models have been introduced in order to account for several factors and simulate complex epidemic dynamics \cite{MR4362890}.

A key distinction in epidemiological models is between the SIR (Susceptible-Infected-Recovered) and SIRS (Susceptible-Infected-Recovered-Susceptible) models. The former describes infections that grant permanent immunity, while the latter represents situations where recovered individuals gradually lose their immunity. Several epidemiological models exhibit a multi-timescale structure. For instance, in the SIRS model, the rate of immunity loss is typically much slower than the rates of infection and recovery. Another example includes models that incorporate both disease and demographic dynamics: the infectious period is generally much shorter than the average lifespan of individuals inside the population \cite{9, 3} and the individual behavior might change much faster than the spread of the epidemic \cite{13, 10, 11, 14,LI2025116273, 12}. These differences in timescales can be analyzed using the Quasi-Steady-State Approximation \cite{8, 7} or Geometric Singular Perturbation Theory \cite{15,16,17,12}.

The emergence and re-emergence of infectious diseases pose a serious threat to public health and can lead to significant economic and social consequences \cite{6}. The direct costs of epidemics are closely related to the number of hospitalizations or the size of the quarantined population. 
Over the years, optimal control models have been widely applied to create effective strategies to mitigate the impact of epidemics, for example by reducing the number of contagions or medical costs. These models generally assume the possibility of controlling part of the population, for example, by vaccinating susceptible individuals or isolating infected ones \cite{27,22,19,18,21,20, 25, 26}. 

In the present work, we introduce a novel SIRS-type model in which a fraction $\theta$ of infected individuals experiences a severe course of the disease, requiring hospitalization. The main goal of this paper is to understand the effects of the parameter $\theta$. In particular, we will focus on the relationship between the severity of the disease, measured in terms of $\theta$, and the economic impact, measured in terms of hospitalizations. It is not obvious that the total number of hospitalizations increases as $\theta$ increases. Indeed, when hospitalized, an infected individual is no longer able to spread the disease. Hence, mass hospitalizations related to a severe disease could indirectly reduce the spread of the epidemic. In fact, the main result of this paper is that the value of $\theta$ that maximizes hospitalizations can be lower than $1$ (obviously, $\theta=0$ minimizes them). Since we do not implement a control strategy, our model is not an optimal control model. Our objective is to try to maximize a quantity by varying a property of the disease (which, once fixed, remains constant for the entire duration of the epidemic). Therefore, our model is somewhat related to an optimal control model; however, we focus only on the characteristics of the disease rather than implementing a control strategy. This represents a great strength of our model. Indeed, control strategies are sometimes extremely difficult to apply in a real society. We hope that the results presented in this paper may help to better understand the influence that the characteristics of a disease have on the spread of an epidemic. 

Since we assume that the rate of immunity loss is much smaller than the rates of infection and recovery, our model has a two-timescale nature. These dynamics are mainly studied using Geometric Singular Perturbation Theory \cite{23,24,40,0}, which helps us understand the behavior of the orbits between two different waves of the epidemic. Interestingly, the two-timescale nature of the model simplifies the study of the economic impact related to the spread of the epidemic. Indeed, we will be able to simplify the expression of the endemic equilibrium and, consequently, subsequent calculations. 

Finally, we highlight that the model we are about to introduce and study can also be interpreted as a scenario in which only a fraction $\theta$ of infected individuals develops symptoms and self-quarantines \cite{10418977,32,30,31,29,33}. In this case, the economic impact can be measured in terms of the number of quarantined individuals.

This paper is organized as follows. In Section \ref{section1} we introduce the novel model and describe a realistic choice of its initial conditions. In Section \ref{section2} we analyze the disease-free and endemic equilibria of the model, and we perform a bifurcation analysis. In Section \ref{section3} we study the two-timescale nature of our model. Section \ref{section4} presents the main results of this paper regarding the relationship between disease severity and the economic impact. We conclude this work in Section \ref{section5}. Numerical results are presented in both Sections \ref{section3} and \ref{section4}. 
	
\section{The model} \label{section1}

\noindent In this section, we propose a novel compartmental SIRS model with hospitalizations. We partition the total population in five compartments, with respect to an ongoing epidemic:
\begin{itemize}
    \item $S$ represents the susceptible individuals; 
    \item $I$ represents individuals characterized by a standard course of the disease; 
    \item $C$ represents individuals characterized by a critical course of the disease who will therefore require hospital treatment shortly; 
    \item $H$ represents hospitalized individuals who therefore cannot infect susceptible individuals; 
    \item $R$ represents individuals who have recovered from the infection and are completely immune. 
\end{itemize}
Denote with $N=S+I+C+H+R$ the total population. The model is described by the following system of ODEs (see Figure \ref{f1}): 
\begin{align} \label{eq1}
\left\{
\begin{aligned}
    \dot{S}(t) &= -\beta \frac{S(t)}{N(t)}  (I(t)+C(t)) + \varepsilon R(t), \\ 
    \dot{I}(t) &= (1-\theta) \beta \frac{S(t)}{N(t)}  (I(t)+C(t)) - \gamma_I I(t), \\ 
    \dot{C}(t) &= \theta \beta \frac{S(t)}{N(t)}(I(t)+C(t)) - \gamma_C C(t), \\ 
    \dot{H}(t) &= \gamma_C C(t) - \gamma_H H(t), \\ 
    \dot{R}(t) &= \gamma_I I(t) + \gamma_H H(t) - \varepsilon R(t), 
\end{aligned}
\right.
\end{align}
where the $^\cdot$ indicates the derivative with respect to the time $t$. The parameters of the system are the following: 
\begin{itemize}
    \item $\beta>0$ is the rate at which the susceptible are infected by individuals characterized by a standard or critical course of the disease; 
    \item $\theta \in [0,1]$ represents the probability that an infected individuals will undergo a critical course of the disease; 
    \item $\gamma_I>0$ is the recovery rate of individuals characterized by a standard course of the disease; 
    \item $\gamma_C > 0$ is the rate at which infected people subject to a critical trend of the disease need to be subjected to hospital care; 
    \item $\gamma_H > 0$ is the recovery rate of individuals subjected to hospital care; 
    \item $0 < \varepsilon \ll 1$  is the loss rate of complete immunity, in particular $\varepsilon \ll \beta, \gamma_I, \gamma_C, \gamma_H$. 
\end{itemize}
Observe that the total population $N$ is conserved over time, hence we can divide all compartments by $N$, which is equivalent to supposing $N=1$. Moreover, we will assume that 
\begin{equation*}
    \gamma_I \le \gamma_C
\end{equation*}
since we expect that severely infected individuals will require hospital treatment before standard infected individuals are recovered. For the sake of simplicity, we did not include a mortality rate among hospitalized individuals. However, when analyzing hospitalization costs, we will focus also on the first wave of the epidemic. During this period, mortality does not affect the costs estimate, as both deceased and recovered individuals do not contribute to disease transmission. Moreover, with this choice our model can also be interpreted as a scenario in which only a fraction $\theta$ of infected individuals develops symptoms and self-quarantines. System (\ref{eq1}) evolves in the biologically relevant region 
\begin{equation*}
    \Bar{\Delta} \coloneqq \{(S,I,C,H,R) \in \mathbb{R}^5 \colon \; S,I,C,H,R \ge 0, \; S+I+C+H+R = 1\}  
\end{equation*}
and on two different timescales: the fast timescale $t$, and the slow timescale $\tau=\varepsilon t$ (this two-timescale structure will be made explicit in Section \ref{section3}). 

\begin{figure}
    \includegraphics[width=12cm]{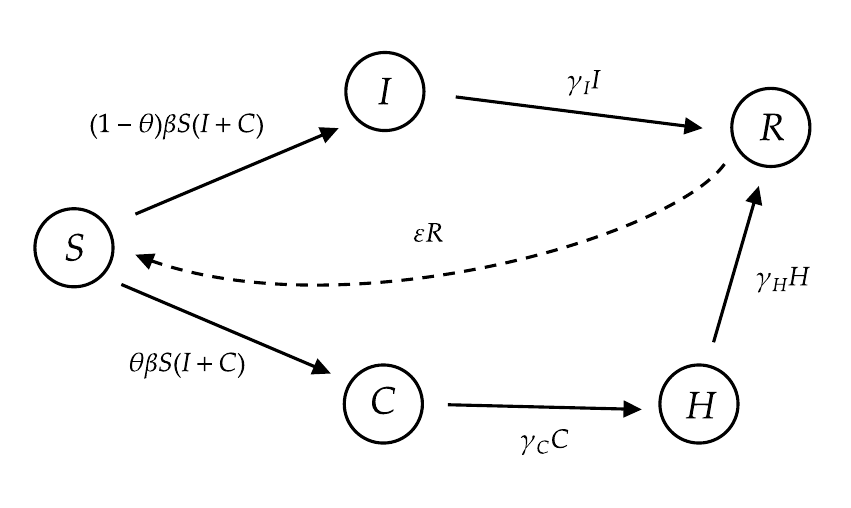}
    \caption{Flow of system (\ref{eq1}). Solid lines represent fast processes (infections, recoveries, and hospitalizations), while the dotted line represents the slow process (loss of complete immunity). This distinction between fast and slow processes highlights the presence of two different timescales, which influence the system’s evolution by separating rapid epidemic events from the gradual loss of immunity.} \label{f1}
\end{figure} 

In the following, we drop the dependence of the compartments $S$, $I$, $C$, $H$, and $R$ on the time variables for ease of notation. We will specify whenever the time variable is changed as a consequence of time rescaling. Since the total population is constant, we can reduce the dimensionality of the system from 5 to 4 via $R=1-S-I-C-H$ obtaining 
\begin{align} \label{eq2}
\left\{
\begin{aligned}
    \dot{S} &= -\beta S (I+C) + \varepsilon (1-S-I-C-H), \\ 
    \dot{I} &= (1-\theta)\beta S (I+C) - \gamma_I I, \\ 
    \dot{C} &= \theta \beta S(I+C) - \gamma_C C, \\ 
    \dot{H} &= \gamma_C C - \gamma_H H.
\end{aligned}
\right.
\end{align}
System (\ref{eq2}) evolves in the biologically relevant region 
\begin{equation} \label{eq3}
    \Delta \coloneqq \{(S,I,C,H) \in \mathbb{R}^4 \colon \; S,I,C,H\ge 0, \; S+I+C+H \le 1\}, 
\end{equation}
as proven in the following proposition.

\begin{proposition}
    The set $\Delta$ (\ref{eq3}) is forward invariant for orbits of system (\ref{eq2}). 
\end{proposition}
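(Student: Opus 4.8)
The plan is to invoke the standard subtangentiality criterion for flow-invariance of closed convex sets (Nagumo's theorem). Since the right-hand side of \eqref{eq2} is polynomial, hence locally Lipschitz, solutions exist and are unique, so the flow is well defined, and $\Delta$ is the intersection of the five half-spaces $\{-S\le 0\}$, $\{-I\le 0\}$, $\{-C\le 0\}$, $\{-H\le 0\}$, $\{S+I+C+H-1\le 0\}$. For such a set it suffices to check, at every boundary point lying on a given facet, that the vector field does not point strictly outward, i.e. that its inner product with the outward normal of the active constraint is $\le 0$. Phrasing the argument through the convex description lets us avoid treating edges and corners separately, and makes the verification genuinely local.

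First I would handle the four coordinate facets $\{S=0\}$, $\{I=0\}$, $\{C=0\}$, $\{H=0\}$. On each of them the corresponding component of the field reduces to a nonnegative expression, because the only negative (``outflow'') term in each of the four equations is proportional to the very compartment that vanishes. Concretely: on $\{S=0\}$ one has $\dot S = \varepsilon(1-S-I-C-H) = \varepsilon(1-I-C-H)\ge 0$, using that $I+C+H\le 1$ at any point of $\Delta$ on that facet; on $\{I=0\}$ one has $\dot I = (1-\theta)\beta S C \ge 0$; on $\{C=0\}$, $\dot C = \theta\beta S I \ge 0$; and on $\{H=0\}$, $\dot H = \gamma_C C \ge 0$. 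In each case the field points into $\Delta$ or is tangent to it.

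The remaining facet is $\{S+I+C+H=1\}$, i.e. $R=0$, with outward normal $(1,1,1,1)$. Here I would add the four equations: the transmission terms $-\beta S(I+C)$, $(1-\theta)\beta S(I+C)$ and $\theta\beta S(I+C)$ sum to zero, the two $\gamma_C C$ terms cancel, and one is left with $\tfrac{\dd}{\dd t}(S+I+C+H) = \varepsilon(1-S-I-C-H) - \gamma_I I - \gamma_H H$, which on this facet equals $-\gamma_I I - \gamma_H H \le 0$. So again the field points inward. Having verified the subtangency condition on all five facets, Nagumo's theorem gives forward invariance of $\Delta$.

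There is no real obstacle here: the computation is routine, and the only points deserving a word of care are the cancellation of the transmission terms on the $R=0$ face and the remark that on the $\{S=0\}$ face the bound $I+C+H\le 1$ holds automatically for points of $\Delta$, so the argument is not circular. One could instead give a hands-on proof via comparison/Gronwall estimates (for instance $S(t)\ge S(0)\exp(-\!\int_0^t \beta(I+C)\,\dd s)$ together with a comparison argument controlling $R=1-S-I-C-H$), but the subtangentiality argument is shorter and cleaner, and I would present that one.
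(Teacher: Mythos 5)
Your proof is correct and follows essentially the same route as the paper: checking that on each of the faces $\{X=0\}$, $X\in\{S,I,C,H\}$, the corresponding component of the field is nonnegative, and that on $\{S+I+C+H=1\}$ the derivative of the sum reduces to $-\gamma_I I-\gamma_H H\le 0$. You merely make explicit the invariance criterion (Nagumo/subtangentiality and local Lipschitz well-posedness) that the paper leaves implicit, which is a welcome but not substantively different addition.
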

\begin{proof}
    For $X \in \{S,I,C,H\}$ we have 
    \begin{equation*}
        \dot{X}|_{X=0} \ge 0.
    \end{equation*}
    Moreover, if we set $Y=S+I+C+H$ we get 
    \begin{equation*}
        \dot{Y}|_{Y=1} = - \gamma_I I - \gamma_H H \le 0, 
    \end{equation*}
    which was to be expected, since the only outwards flows from $Y$ are $-\gamma_I I $ and $-\gamma_H H$ to the $R$ compartment. 
\end{proof}

\subsection{Initial conditions} \label{subsec1}

\noindent We conclude the section with some remarks concerning the choice of realistic initial conditions $\mathbf{x_0}=(S_0, I_0, C_0, H_0)$ for system (\ref{eq2}). When the spread of the disease begins, we should assume that $H_0=R_0=0$, which implies that $S_0=1-I_0-C_0$. Moreover, we must have $I_0+C_0 \ll S_0$. In particular, from the definition of $\theta$ we directly have $I_0=(1-\theta)(I_0+C_0)$ and $C_0=\theta(I_0+C_0)$. Therefore it suffices to choose the (very small) initial number of infective individuals to deduce the initial conditions.     

\begin{remark}
If $\gamma_I=\gamma_C=\gamma$ and the initial conditions are chosen in the way just described, then it is easy to show that $I \equiv (1-\theta) T$ and $C \equiv \theta T$, where we defined $T=I+C$. Therefore system (\ref{eq2}) reduces to 
\begin{align} \label{eqsimple}
\left\{
\begin{aligned}
    \dot{S} &= -\beta S T + \varepsilon (1-S-T-H), \\ 
    \dot{T} &= \beta S T - \gamma T, \\ 
    \dot{H} &= \gamma \theta T - \gamma_H H ,
\end{aligned}
\right.
\end{align}
which has dimension 3 instead of dimension 4. 
\end{remark}

\section{Equilibria, stability, and bifurcation analysis} \label{section2}

\subsection{Disease-free equilibrium}

\noindent The disease-free equilibrium (DFE) of (\ref{eq2}) is $\mathbf{x_1}=(1,0,0,0)$. The Next Generation Matrix method \cite{1} allows us to find the value of the basic reproduction number $\mathcal{R}_0$, which biologically represents the expected number of cases directly generated by one infective individual in a large population where all other people are susceptible to infection.

\begin{proposition} \label{prop1}
    The basic reproduction number of system (\ref{eq2}) is 
    \begin{equation*}
        \mathcal{R}_0=\beta\left(\frac{1-\theta}{\gamma_I}+\frac{\theta}{\gamma_C}\right). 
    \end{equation*}
\end{proposition}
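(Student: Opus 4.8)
The plan is to apply the Next Generation Matrix (NGM) method of van den Driessche and Watmough to system \eqref{eq2} at the disease-free equilibrium $\mathbf{x_1}=(1,0,0,0)$. The infected compartments are $I$, $C$, and $H$ (hospitalized individuals are still ``infected'' in the epidemiological bookkeeping sense, even though they do not transmit). First I would write the subsystem governing $(I,C,H)$ in the form $\dot{\mathbf{y}} = \mathcal{F}(\mathbf{y}) - \mathcal{V}(\mathbf{y})$, where $\mathcal{F}$ collects the \emph{new infection} terms and $\mathcal{V}$ collects all remaining transition terms (recoveries, progression to hospitalization, and any outflows). Concretely, $\mathcal{F} = \bigl((1-\theta)\beta S(I+C),\ \theta\beta S(I+C),\ 0\bigr)^{\!\top}$ and $\mathcal{V} = \bigl(\gamma_I I,\ \gamma_C C,\ \gamma_H H - \gamma_C C\bigr)^{\!\top}$, since the inflow $\gamma_C C$ into $H$ is not a new infection but an internal transition.

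Next I would linearize at $\mathbf{x_1}$, where $S=1$, to obtain the matrices
\begin{equation*}
F = \begin{pmatrix} (1-\theta)\beta & (1-\theta)\beta & 0 \\ \theta\beta & \theta\beta & 0 \\ 0 & 0 & 0 \end{pmatrix}, \qquad V = \begin{pmatrix} \gamma_I & 0 & 0 \\ 0 & \gamma_C & 0 \\ 0 & -\gamma_C & \gamma_H \end{pmatrix}.
\end{equation*}
One checks that $V$ is lower triangular with positive diagonal, hence invertible, and that $V^{-1}$ has nonnegative entries (so the NGM framework applies). Then $\mathcal{R}_0 = \rho(F V^{-1})$, the spectral radius of the next generation matrix. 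Computing $V^{-1}$ (its first two columns are what matter, since the third column of $F$ vanishes) and multiplying, the matrix $FV^{-1}$ has rank one in its relevant block, so its only nonzero eigenvalue equals its trace, which works out to $\beta\bigl((1-\theta)/\gamma_I + \theta/\gamma_C\bigr)$.

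The computation is entirely routine; the only point requiring care — and the step I would flag as the main subtlety rather than an obstacle — is the correct bookkeeping of which terms belong to $\mathcal{F}$ versus $\mathcal{V}$, in particular recognizing that the flow $\gamma_C C \to H$ is a within-host transition and not a new infection, and that including $H$ as an infected compartment is harmless because it produces no secondary infections (the third row of $F$ is zero), so it does not affect $\rho(FV^{-1})$. An equivalent and perhaps cleaner route is to drop $H$ from the infected subsystem entirely, work with the $2\times2$ matrices in $(I,C)$, and note that $FV^{-1} = \beta S_{\mathrm{DFE}}\,\mathbf{1}\,(\tfrac{1-\theta}{\gamma_I},\tfrac{\theta}{\gamma_C})$ is a rank-one matrix whose spectral radius is read off immediately as the stated expression.
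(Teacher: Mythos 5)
Your proposal is correct and follows essentially the same route as the paper: the identical $\mathcal{F}$/$\mathcal{V}$ split with $I$, $C$, $H$ as disease compartments, the same matrices $F$ and $V$ at the DFE, and $\mathcal{R}_0=\rho(FV^{-1})$ read off from the rank-one structure, giving $\beta\left(\frac{1-\theta}{\gamma_I}+\frac{\theta}{\gamma_C}\right)$. The remarks about the harmlessness of including $H$ and the optional $2\times 2$ reduction are fine additional observations but do not change the argument.
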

\begin{proof}
    System (\ref{eq2}) has three disease compartments, namely $I$, $C$, and $H$. We can write 
    \begin{align*}
        \dot{I} = \mathcal{F}_1(\mathbf{x})-\mathcal{V}_1(\mathbf{x}), \\ 
        \dot{C} = \mathcal{F}_2(\mathbf{x})-\mathcal{V}_2(\mathbf{x}), \\ 
        \dot{H} = \mathcal{F}_3(\mathbf{x})-\mathcal{V}_3(\mathbf{x}),  
    \end{align*}
    where $\mathbf{x}=(S,I,C,H)$ and 
    \begin{align*}
        \mathcal{F}_1(\mathbf{x}) = (1-\theta)\beta S (I+C),& \quad \mathcal{V}_1(\mathbf{x})=\gamma_I I, \\ 
        \mathcal{F}_2(\mathbf{x}) = \theta\beta S (I+C),& \quad \mathcal{V}_2(\mathbf{x})=\gamma_C C, \\
        \mathcal{F}_3(\mathbf{x}) = 0,& \quad \mathcal{V}_3(\mathbf{x})=\gamma_H H - \gamma_C C. 
    \end{align*}
    Thus we obtain 
    \begin{equation*}
    F = 
    \begin{bmatrix}
    \frac{\partial \mathcal{F}_1}{\partial I} (\mathbf{x_1}) & \frac{\partial \mathcal{F}_1}{\partial C} (\mathbf{x_1})  & \frac{\partial \mathcal{F}_1}{\partial H} (\mathbf{x_1})\\
    \frac{\partial \mathcal{F}_2}{\partial I} (\mathbf{x_1}) & \frac{\partial \mathcal{F}_2}{\partial C} (\mathbf{x_1})  & \frac{\partial \mathcal{F}_2}{\partial H} (\mathbf{x_1})\\
    \frac{\partial \mathcal{F}_3}{\partial I} (\mathbf{x_1}) & \frac{\partial \mathcal{F}_3}{\partial C} (\mathbf{x_1})  & \frac{\partial \mathcal{F}_3}{\partial H} (\mathbf{x_1})\\
    \end{bmatrix}
    = 
    \begin{bmatrix}
        (1-\theta)\beta & (1-\theta) \beta & 0 \\ 
        \theta \beta & \theta \beta & 0 \\ 
        0 & 0 & 0 \\
    \end{bmatrix}
    \end{equation*}
    and 
    \begin{equation*}
    V = 
    \begin{bmatrix}
    \frac{\partial \mathcal{V}_1}{\partial I} (\mathbf{x_1}) & \frac{\partial \mathcal{V}_1}{\partial C} (\mathbf{x_1})  & \frac{\partial \mathcal{V}_1}{\partial H} (\mathbf{x_1})\\
    \frac{\partial \mathcal{V}_2}{\partial I} (\mathbf{x_1}) & \frac{\partial \mathcal{V}_2}{\partial C} (\mathbf{x_1})  & \frac{\partial \mathcal{V}_2}{\partial H} (\mathbf{x_1})\\
    \frac{\partial \mathcal{V}_3}{\partial I} (\mathbf{x_1}) & \frac{\partial \mathcal{V}_3}{\partial C} (\mathbf{x_1})  & \frac{\partial \mathcal{V}_3}{\partial H} (\mathbf{x_1})\\
    \end{bmatrix}
    = 
    \begin{bmatrix}
        \gamma_I & 0 & 0 \\ 
        0 & \gamma_C & 0 \\ 
        0 & -\gamma_C & \gamma_H \\
    \end{bmatrix}. 
    \end{equation*}
    Therefore the Next Generation Matrix $M$, defined as $M=F V^{-1}$, is \cite{1}
    \begin{equation*}
        M=
        \begin{bmatrix}
            \beta \frac{1-\theta}{\gamma_I} & \beta \frac{1-\theta}{\gamma_C} & 0 \\ 
            \beta \frac{\theta}{\gamma_I} & \beta \frac{\theta}{\gamma_C} & 0 \\ 
            0 & 0 & 0 \\ 
        \end{bmatrix},
    \end{equation*}
    from which 
    \begin{equation*}
        \mathcal{R}_0=\rho(M)=\beta\left(\frac{1-\theta}{\gamma_I}+\frac{\theta}{\gamma_C}\right),
    \end{equation*}
    where $\rho(\cdot)$ denotes the spectral radius of a matrix. 
\end{proof}

\begin{remark}
    Notice that the basic reproduction number does not depend on the dynamics of hospitalized individuals $H$. Indeed, even if they contracted the disease, they are not able to infect susceptible individuals. Moreover, if $\gamma_I=\gamma_C=\gamma$ then $\mathcal{R}_0=\beta/\gamma$, which is independent of $\theta$. 
\end{remark}

A direct consequence of Proposition \ref{prop1} is an (in)stability result for the DFE. To prove such result, we first need the following lemma. 

\begin{lemma} \label{lemma1}
        If $\mathcal{R}_0<1$, then $\beta < \gamma_I + \gamma_C$. 
    \end{lemma}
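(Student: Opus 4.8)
The plan is to bound $\mathcal{R}_0$ from below by $\beta/(\gamma_I+\gamma_C)$ and then read off the conclusion. Since $\gamma_I,\gamma_C>0$ we have $\gamma_I\le\gamma_I+\gamma_C$ and $\gamma_C\le\gamma_I+\gamma_C$, hence $\tfrac1{\gamma_I}\ge\tfrac1{\gamma_I+\gamma_C}$ and $\tfrac1{\gamma_C}\ge\tfrac1{\gamma_I+\gamma_C}$. Because $\theta\in[0,1]$, the quantity $\tfrac{1-\theta}{\gamma_I}+\tfrac{\theta}{\gamma_C}$ is a convex combination of $\tfrac1{\gamma_I}$ and $\tfrac1{\gamma_C}$, so it is at least $\tfrac1{\gamma_I+\gamma_C}$. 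Multiplying by $\beta>0$ and invoking the formula from Proposition \ref{prop1},
\[
\mathcal{R}_0=\beta\left(\frac{1-\theta}{\gamma_I}+\frac{\theta}{\gamma_C}\right)\ \ge\ \frac{\beta}{\gamma_I+\gamma_C}.
\]
Thus $\mathcal{R}_0<1$ forces $\tfrac{\beta}{\gamma_I+\gamma_C}<1$, i.e.\ $\beta<\gamma_I+\gamma_C$, which is exactly the claim. Note that the standing assumption $\gamma_I\le\gamma_C$ is not even needed for this argument; only positivity of $\gamma_I,\gamma_C$ is used.

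An alternative, slightly more computational route is to rewrite $\mathcal{R}_0<1$ as $\beta<\tfrac{\gamma_I\gamma_C}{(1-\theta)\gamma_C+\theta\gamma_I}$, so that it suffices to verify $\tfrac{\gamma_I\gamma_C}{(1-\theta)\gamma_C+\theta\gamma_I}\le\gamma_I+\gamma_C$; clearing the (positive) denominator and expanding, this reduces to $0\le\gamma_C^2-\theta(\gamma_C^2-\gamma_I^2)$, which holds since $\theta\le1$ and $\gamma_C^2-\gamma_I^2\le\gamma_C^2$. I would present the first argument, as it is cleaner and coordinate-free.

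There is essentially no hard part here: the only points requiring care are the direction of the inequalities — we want a \emph{lower} bound on $\mathcal{R}_0$, hence \emph{upper} bounds on $\gamma_I$ and $\gamma_C$ inside the reciprocals — and the observation that the weights $1-\theta$ and $\theta$ are nonnegative and sum to one, which is precisely what lets one pass from the two pointwise bounds to the bound on their convex combination.
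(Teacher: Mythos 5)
Your proof is correct. It is the same elementary flavor as the paper's argument but runs along a slightly different line: the paper proceeds by contradiction, assuming $\beta \ge \gamma_I+\gamma_C$ and bounding
\begin{equation*}
\mathcal{R}_0 \;\ge\; \frac{\gamma_I+\gamma_C}{\gamma_I\gamma_C}\,\min\{\gamma_I,\gamma_C\} \;=\; \frac{\gamma_I+\gamma_C}{\max\{\gamma_I,\gamma_C\}} \;=\; 1+\frac{\min\{\gamma_I,\gamma_C\}}{\max\{\gamma_I,\gamma_C\}} \;>\;1,
\end{equation*}
i.e.\ it lower-bounds the convex combination $\tfrac{1-\theta}{\gamma_I}+\tfrac{\theta}{\gamma_C}$ by the smaller endpoint $1/\max\{\gamma_I,\gamma_C\}$, whereas you lower-bound both endpoints by the cruder constant $1/(\gamma_I+\gamma_C)$ and conclude directly from the uniform estimate $\mathcal{R}_0 \ge \beta/(\gamma_I+\gamma_C)$ (the contrapositive of the claim). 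Your bound is weaker but entirely sufficient, and the direct formulation is arguably cleaner than the contradiction; the paper's sharper estimate yields the extra information that $\beta\ge\gamma_I+\gamma_C$ forces $\mathcal{R}_0$ strictly above $1$ by the explicit margin $\min\{\gamma_I,\gamma_C\}/\max\{\gamma_I,\gamma_C\}$, though this surplus is not used elsewhere. Your observation that $\gamma_I\le\gamma_C$ is not needed is accurate (and in fact the paper's proof does not use it either, since the min/max expressions are symmetric); your second, purely algebraic route is also valid as written.
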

    \begin{proof}
        Suppose by contradiction that $\beta \ge \gamma_I+\gamma_C$, then 
        \begin{equation*}
        \begin{split}
            \mathcal{R}_0 & \ge \frac{\gamma_I + \gamma_C}{\gamma_I \gamma_C} (\gamma_C(1-\theta)+\gamma_I \theta) \ge \frac{\gamma_I + \gamma_C}{\gamma_I \gamma_C} \min{\{\gamma_I, \gamma_C\}} \\ 
            &=\frac{\gamma_I + \gamma_C}{\max{\{\gamma_I, \gamma_C\}}} = 1+ \frac{\min{\{\gamma_I, \gamma_C\}}} {\max{\{\gamma_I, \gamma_C\}}} \\ 
            & > 1, 
        \end{split}
        \end{equation*}
        which contradicts our hypothesis. 
\end{proof}

\begin{theorem} \label{teo1}
    The DFE $\mathbf{x_1}$ is globally asymptotically stable if $\mathcal{R}_0 < 1$, while it is unstable if $\mathcal{R}_0 > 1$. 
\end{theorem}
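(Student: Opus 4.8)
The plan is to handle the two regimes separately, in both cases exploiting the structure of the linearization at $\mathbf{x_1}$, and in the stable case adding a monotone comparison argument to upgrade local to global stability. The key preliminary observation is that the Jacobian of (\ref{eq2}) at $\mathbf{x_1}=(1,0,0,0)$ is block triangular: since the infected compartments vanish at the DFE, its first column is $(-\varepsilon,0,0,0)^\top$ and its last row is $(0,0,\gamma_C,-\gamma_H)$, so the characteristic polynomial factors and the spectrum is $\{-\varepsilon,\,-\gamma_H\}$ together with the two eigenvalues of
\[
J=\begin{bmatrix}(1-\theta)\beta-\gamma_I & (1-\theta)\beta\\ \theta\beta & \theta\beta-\gamma_C\end{bmatrix}.
\]
A direct computation gives $\operatorname{tr}J=\beta-\gamma_I-\gamma_C$ and $\det J=\gamma_I\gamma_C(1-\mathcal{R}_0)$, using the formula for $\mathcal{R}_0$ from Proposition \ref{prop1}.

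\emph{Instability for $\mathcal{R}_0>1$.} In this case $\det J=\gamma_I\gamma_C(1-\mathcal{R}_0)<0$, so $J$ has a real eigenvalue of each sign; in particular the linearization at $\mathbf{x_1}$ has an eigenvalue with positive real part, hence the DFE is unstable. (Alternatively one may invoke the standard Next Generation Matrix instability result of van den Driessche–Watmough.)

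\emph{Global asymptotic stability for $\mathcal{R}_0<1$.} For local asymptotic stability I would use the factorization above: the eigenvalues $-\varepsilon$ and $-\gamma_H$ are negative, $\det J>0$ since $\mathcal{R}_0<1$, and $\operatorname{tr}J=\beta-\gamma_I-\gamma_C<0$ by Lemma \ref{lemma1}; the Routh–Hurwitz criterion then makes $J$ Hurwitz, so $\mathbf{x_1}$ is locally asymptotically stable. For global attractivity, fix an orbit starting in $\Delta$; by forward invariance all coordinates remain nonnegative and $S\le 1$. Bounding $S\le1$ in the $I$ and $C$ equations gives $\dot I\le(1-\theta)\beta(I+C)-\gamma_I I$ and $\dot C\le\theta\beta(I+C)-\gamma_C C$, i.e. $(I,C)^\top$ is dominated componentwise by the solution of $\dot w=Jw$ with the same initial data. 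Since $J$ is a Metzler matrix, $e^{Jt}$ is nonnegative and the cooperative comparison principle yields $0\le(I(t),C(t))^\top\le e^{Jt}(I(0),C(0))^\top$; as $J$ is Hurwitz this forces $I(t),C(t)\to0$. Substituting into $\dot H=\gamma_C C-\gamma_H H$ gives $H(t)\to0$, and then from $\dot R=\gamma_I I+\gamma_H H-\varepsilon R$ with $R\ge0$ one obtains $\limsup_{t\to\infty}R(t)\le\eta/\varepsilon$ for every $\eta>0$, hence $R(t)\to0$; finally $S=1-I-C-H-R\to1$. Local asymptotic stability plus this global attractivity gives global asymptotic stability of $\mathbf{x_1}$.

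\emph{Expected main obstacle.} All of the eigenvalue bookkeeping is routine once Lemma \ref{lemma1} supplies the sign of $\operatorname{tr}J$; the only point requiring care is the global attractivity step, namely justifying the monotone comparison with the Metzler matrix $J$ and then propagating the limits $I,C\to0\Rightarrow H\to0\Rightarrow R\to0\Rightarrow S\to1$ rigorously through scalar $\limsup$ estimates rather than informally. I expect this chaining argument, and not the linear algebra, to be the delicate part of the write-up.
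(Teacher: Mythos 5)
Your argument is correct, but it takes a genuinely different route from the paper's. The paper's primary proof is Lyapunov-theoretic: it takes $u = I/\gamma_I + C/\gamma_C$, computes $\dot u = (I+C)(S\mathcal{R}_0-1)\le 0$ when $\mathcal{R}_0<1$, invokes LaSalle's Invariance Principle to confine the $\omega$-limit set to $\{I=C=0\}$, and then reads off $H\to 0$, $S\to 1$ on that invariant set; instability for $\mathcal{R}_0>1$ is argued from the existence of points near the DFE where $\dot u>0$, and the linearization (whose spectrum $\{-\varepsilon,-\gamma_H,\lambda_{3,4}\}$ agrees with your block-triangular computation, using Lemma \ref{lemma1} exactly as you do) is recorded only as an alternative giving local information. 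You instead make the linearization carry the whole proof: the $2\times 2$ block $J$ with $\operatorname{tr}J=\beta-\gamma_I-\gamma_C<0$ (by Lemma \ref{lemma1}) and $\det J=\gamma_I\gamma_C(1-\mathcal{R}_0)$ settles both local stability and instability — your instability step via $\det J<0$ is in fact tighter than the paper's Chetaev-style remark about $\dot u>0$ — and you upgrade local to global attractivity through the cooperative comparison $\tfrac{d}{dt}(I,C)^\top\le J\,(I,C)^\top$, valid on $\Delta$ since $S\le 1$, followed by the chain $I,C\to 0\Rightarrow H\to 0\Rightarrow R\to 0\Rightarrow S\to 1$. What each buys: the paper avoids any comparison machinery at the price of the LaSalle invariance analysis; your route avoids LaSalle but must genuinely invoke the Kamke/quasimonotone comparison theorem for the Metzler matrix $J$ (note $e^{-Jt}$ is not entrywise nonnegative, so a naive variation-of-constants monotonicity argument does not suffice — your instinct that this is the delicate step is right), and as a bonus it yields exponential decay of $I$ and $C$ under the sole hypothesis $\mathcal{R}_0<1$, a conclusion the paper only states (in Proposition \ref{propexp}) under the stronger assumption $\beta<\gamma_I$.
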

\begin{proof}
    Assume that $\mathcal{R}_0 < 1$. We would like to construct a Lyapunov function $u$ for system (\ref{eq2}) and prove the global asymptotic stability of the DFE using LaSalle's Invariance Principle. Following \cite{4}, we define 
    \begin{equation*} 
        u = \frac{I}{\gamma_I} + \frac{C}{\gamma_C}, 
    \end{equation*}
    which satisfies 
    \begin{equation} \label{derLyap}
        \dot{u} = (I+C) (S \mathcal{R}_0-1) \le 0. 
    \end{equation}
    Note that $\dot{u}|_{\mathbf{x}_1}=0$, but we also have $\dot{u}=0$ if at some time $I+C=0$. LaSalle's Invariance Principle implies that all orbits converge towards the largest invariant subset of the set of the equilibria of (\ref{derLyap}), which is $\{I=C=0\}$ since if $I+C=0$ then $I \equiv C \equiv 0$. On such set, (\ref{eq2}) simplifies as  
    \begin{align*}
    \left\{
    \begin{aligned}
        \dot{S} &= \varepsilon (1-S-H), \\ 
        \dot{H} &= -\gamma_H H, 
    \end{aligned}
    \right.
    \end{align*}
    meaning that $H$ decreases exponentially towards 0, while $S$ increases towards 1. Therefore the orbits converge towards the DFE $\mathbf{x_1}$.
    
    On the other hand, if $\mathcal{R}_0 > 1$ then clearly there exist some points in any neighborhood of the DFE such that $\dot{u}>0$, therefore in this case the DFE is unstable. 

    \medskip

    Finally, we remark that an alternative proof can be obtained using Lyapunov's Indirect Method. With this strategy, we are only able to prove the local asymptotic stability of the DFE, but we present it since it will be useful later. The Jacobian matrix of (\ref{eq2}) computed in the DFE $\mathbf{x_1}$ is 
    \begin{equation*}
        J|_{\mathbf{x}=\mathbf{x_1}} = 
        \begin{bmatrix}
            -\varepsilon & -\beta-\varepsilon & -\beta-\varepsilon & -\varepsilon \\ 
            0 & (1-\theta)\beta-\gamma_I & (1-\theta)\beta & 0 \\ 
            0 & \theta \beta & \theta \beta - \gamma_C & 0 \\ 
            0 & 0 & \gamma_C & -\gamma_H \\ 
        \end{bmatrix},
    \end{equation*}
    whose eigenvalues are 
    \begin{equation*}
        \lambda_1 = -\varepsilon < 0, \; \lambda_2=-\gamma_H < 0, \; \lambda_{3,4} = \frac{1}{2} \left(\beta-\gamma_I-\gamma_C \pm \sqrt{(\beta-\gamma_I-\gamma_C)^2-4 \gamma_I \gamma_C(1-\mathcal{R}_0)}\right). 
    \end{equation*} 
    
    Suppose that $\mathcal{R}_0>1$. Since 
    \begin{equation*}
        \sqrt{(\beta-\gamma_I-\gamma_C)^2-4 \gamma_I \gamma_C(1-\mathcal{R}_0)} > |\beta-\gamma_I-\gamma_C|
    \end{equation*}
    the eigenvalue $\lambda_3$ has positive real part, therefore Lyapunov's Indirect Method implies that the DFE is unstable. 

    Suppose now that $\mathcal{R}_0<1$. Lemma \ref{lemma1} implies that if $(\beta-\gamma_I-\gamma_C)^2-4 \gamma_I \gamma_C(1-\mathcal{R}_0) < 0$ the real parts of all eigenvalues are negative. Similarly, if $(\beta-\gamma_I-\gamma_C)^2-4 \gamma_I \gamma_C(1-\mathcal{R}_0) \ge 0$ then 
    \begin{equation*}
        \sqrt{(\beta-\gamma_I-\gamma_C)^2-4 \gamma_I \gamma_C(1-\mathcal{R}_0)} < |\beta-\gamma_I-\gamma_C|, 
    \end{equation*}
    which again implies that the real parts of all eigenvalues are negative. The local asymptotic stability of the DFE follows from Lyapunov's Indirect Method. 
\end{proof}

\begin{remark} \label{remstar}
    Since $\gamma_C \ge \gamma_I$, as $\theta$ grows the basic reproduction number $\mathcal{R}_0$ decreases. Indeed, as the disease becomes more dangerous, more individuals will be hospitalized and therefore they will not be able to spread the disease. In particular, it is possible that there exists a value $\theta^* \in (0,1)$ such that $\mathcal{R}_0 > 1 $ for all $\theta < \theta^*$ (which correspond to a growing epidemic), while $\mathcal{R}_0 < 1 $ for all $\theta > \theta^*$ (which correspond to the absence of an epidemic). 
\end{remark}

With an additional assumption on the rates $\beta$ and $\gamma_I$ we are able to prove the following global stability result for the DFE. 

\begin{proposition} \label{propexp}
    Suppose that $\beta < \gamma_I$, then the DFE is globally exponentially stable. 
\end{proposition}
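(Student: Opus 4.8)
The plan is to exploit the strict inequality $\beta<\gamma_I\le\gamma_C$ together with the a priori bound $0\le S\le 1$ valid on the forward invariant region $\Delta$ to force the infected compartments to decay exponentially, and then to propagate this decay through the remaining equations in a cascade. (Note that $\beta<\gamma_I$ already forces $\mathcal{R}_0<1$, so global asymptotic stability of the DFE is known from Theorem \ref{teo1}; the content of the proposition is the exponential \emph{rate} together with the linear dependence of the estimate on the initial data.)

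First I would introduce the auxiliary quantity $V\coloneqq I+C$ and compute, summing the $\dot I$ and $\dot C$ equations of system (\ref{eq2}),
\begin{equation*}
    \dot V=\beta S(I+C)-\gamma_I I-\gamma_C C\le\beta V-\gamma_I V=-(\gamma_I-\beta)\,V,
\end{equation*}
where the inequality uses $S\le 1$ and $\gamma_C\ge\gamma_I$. Since $\gamma_I-\beta>0$ by hypothesis, Gr\"onwall's inequality gives $I(t)+C(t)=V(t)\le V(0)\,e^{-(\gamma_I-\beta)t}$, so $I$ and $C$ each decay exponentially, with a bound linear in the initial data.

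Next I would substitute this into the $H$-equation, $\dot H=\gamma_C C-\gamma_H H\le\gamma_C V(0)\,e^{-(\gamma_I-\beta)t}-\gamma_H H$, and a standard variation-of-constants comparison yields $H(t)\le K_1\bigl(H(0)+V(0)\bigr)e^{-\lambda_1 t}$ for a suitable $K_1>0$ and any $\lambda_1<\min\{\gamma_I-\beta,\gamma_H\}$ (taking $\lambda_1$ strictly smaller absorbs the harmless factor $t$ appearing in the resonant case $\gamma_I-\beta=\gamma_H$). Finally, setting $W\coloneqq 1-S=I+C+H+R$ and using $\dot S=-\beta S(I+C)+\varepsilon R$ together with $R=W-I-C-H$, one obtains
\begin{equation*}
    \dot W=\beta S(I+C)-\varepsilon R\le-\varepsilon W+(\beta+\varepsilon)V+\varepsilon H,
\end{equation*}
and since the forcing terms $V$ and $H$ already decay exponentially with bounds linear in the initial data, the same comparison argument gives $W(t)\le K_2\,\norm{\mathbf{x_0}-\mathbf{x_1}}\,e^{-\lambda t}$ for an appropriate $K_2>0$ and any $\lambda\in\bigl(0,\min\{\gamma_I-\beta,\gamma_H,\varepsilon\}\bigr)$. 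Combining the three estimates (and noting that $R=W-I-C-H$ then decays as well) we conclude $\norm{\mathbf{x}(t)-\mathbf{x_1}}\le K\,\norm{\mathbf{x_0}-\mathbf{x_1}}\,e^{-\lambda t}$ uniformly for $\mathbf{x_0}\in\Delta$, which is the asserted global exponential stability.

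The genuinely new ingredient is the very first inequality: it is precisely here that the hypothesis $\beta<\gamma_I$ enters, and it is what makes the infected pool decay on its own without any help from the other compartments. Everything downstream is routine; the only point requiring care is the bookkeeping in the cascade --- checking that each comparison estimate stays linear in the distance $\norm{\mathbf{x_0}-\mathbf{x_1}}$ to the DFE, and that coinciding decay rates are dealt with by slightly lowering $\lambda$, using the elementary bound $t^{k}e^{-\mu t}\le C_{\mu,\lambda}\,e^{-\lambda t}$ valid for $0<\lambda<\mu$.
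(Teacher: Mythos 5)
Your proof is correct and takes essentially the same route as the paper's: the identical key estimate $\frac{d}{dt}(I+C)\le -(\gamma_I-\beta)(I+C)$, obtained from $S\le 1$ and $\gamma_C\ge\gamma_I$, followed by a cascade through $H$ and $S$. Your variation-of-constants bookkeeping simply fleshes out what the paper dispatches in one sentence (on $\{I=C=0\}$, $H$ decays exponentially and $S\to 1$), so the two arguments coincide in substance.
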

\begin{proof}
    Notice that since $\beta < \gamma_I \le \gamma_C$, then $\mathcal{R}_0<1$. The number of infective individual converges exponentially towards zero, indeed 
    \begin{equation*}
    \begin{split}
        \dot{I}+\dot{C}&=\beta S (I+C) - \gamma_I I - \gamma_C C \\ 
        & \le \beta (I+C) - \gamma_I I - \gamma_I C \\
        & = \beta \left(1-\frac{\gamma_I}{\beta}\right)(I+C)
    \end{split}
    \end{equation*}
    and $1-\gamma_I/\beta < 0$. On the set $\{ I = C =0\}$, $H$ converges exponentially to 0 while $S$ to 1. This concludes the proof. 
\end{proof}

In particular, if $\gamma_I=\gamma_C=\gamma$ and $\mathcal{R}_0<1$, then the DFE is globally exponentially stable.

\subsection{Endemic equilibrium}

\noindent If $\mathcal{R}_0>1$ then system (\ref{eq2}) admits an endemic equilibrium (EE), i.e., an equilibrium in which $I, C, H > 0$. Denote such equilibrium with $\mathbf{x_2}=(S_2, I_2, C_2, H_2)$, calculations show that 
\begin{equation} \label{eqEE}
\begin{aligned}
    S_2 &= \frac{1}{\mathcal{R}_0}, \\ 
    I_2 &= \varepsilon\left(1-\frac{1}{\mathcal{R}_0}\right) \frac{1-\theta}{\theta} \frac{\gamma_C}{\gamma_I} \left(\frac{\gamma_C}{\theta}+\varepsilon\left(1+\frac{\gamma_C}{\gamma_H}+\frac{\gamma_C}{\gamma_I} \left(\frac{1}{\theta}-1\right)\right)\right)^{-1}, \\ 
    C_2 &= \varepsilon\left(1-\frac{1}{\mathcal{R}_0}\right)  \left(\frac{\gamma_C}{\theta}+\varepsilon\left(1+\frac{\gamma_C}{\gamma_H}+\frac{\gamma_C}{\gamma_I} \left(\frac{1}{\theta}-1\right)\right)\right)^{-1}, \\
    H_2 &= \varepsilon\left(1-\frac{1}{\mathcal{R}_0}\right) \frac{\gamma_C}{\gamma_H} \left(\frac{\gamma_C}{\theta}+\varepsilon\left(1+\frac{\gamma_C}{\gamma_H}+\frac{\gamma_C}{\gamma_I} \left(\frac{1}{\theta}-1\right)\right)\right)^{-1}.
\end{aligned}
\end{equation}
Notice that, since $\varepsilon \ll 1$, 
\begin{equation} \label{eqEEbis}
    \left(\frac{\gamma_C}{\theta}+\varepsilon\left(1+\frac{\gamma_C}{\gamma_H}+\frac{\gamma_C}{\gamma_I} \left(\frac{1}{\theta}-1\right)\right)\right)^{-1} = \frac{\theta}{\gamma_C} + \mathcal{O}(\varepsilon) \simeq \frac{\theta}{\gamma_C}. 
\end{equation} 
This implies that $I_2$, $C_2, H_2 \in \mathcal{O}(\varepsilon)$, therefore at the endemic equilibrium the vast majority of the population is composed by susceptible and recovered individuals.

\begin{theorem}
    If $\mathcal{R}_0>1$ the EE $\mathbf{x_2}$ is locally asymptotically stable. 
\end{theorem}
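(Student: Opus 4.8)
The plan is to prove local asymptotic stability of the endemic equilibrium $\mathbf{x_2}$ via Lyapunov's Indirect Method, i.e., by showing that every eigenvalue of the Jacobian $J$ of system (\ref{eq2}) evaluated at $\mathbf{x_2}$ has negative real part. First I would write down $J|_{\mathbf{x}=\mathbf{x_2}}$ explicitly. Using $S_2 = 1/\mathcal{R}_0$ and the relations satisfied at equilibrium (in particular $\beta S_2(I_2+C_2) = \gamma_I I_2/(1-\theta) = \gamma_C C_2/\theta$ and $\gamma_C C_2 = \gamma_H H_2$), the entries simplify considerably: the $\dot H$ row contributes the block structure $\gamma_C$ coupling into $-\gamma_H$, and writing $\kappa \coloneqq \beta S_2 = 1/(\tfrac{1-\theta}{\gamma_I}+\tfrac{\theta}{\gamma_C})$ and $\sigma \coloneqq \beta(I_2+C_2)$ (which is $\mathcal{O}(\varepsilon)$ by (\ref{eqEE})–(\ref{eqEEbis})), I get
\begin{equation*}
    J|_{\mathbf{x}=\mathbf{x_2}} =
    \begin{bmatrix}
        -\sigma-\varepsilon & -\kappa-\varepsilon & -\kappa-\varepsilon & -\varepsilon \\
        (1-\theta)\sigma & (1-\theta)\kappa-\gamma_I & (1-\theta)\kappa & 0 \\
        \theta\sigma & \theta\kappa & \theta\kappa-\gamma_C & 0 \\
        0 & 0 & \gamma_C & -\gamma_H
    \end{bmatrix}.
\end{equation*}

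Next I would exploit the two-timescale structure: since $\sigma, \varepsilon$ are both $\mathcal{O}(\varepsilon)$, the matrix $J|_{\mathbf{x}=\mathbf{x_2}}$ is an $\mathcal{O}(\varepsilon)$-perturbation of the matrix $J_0$ obtained by setting $\sigma=\varepsilon=0$. The matrix $J_0$ is block lower-triangular: its first row is $(0,-\kappa,-\kappa,0)$, but the $(1,1)$ entry is $0$, so $J_0$ has eigenvalue $0$ with the remaining spectrum governed by the lower-right $3\times 3$ block acting on $(I,C,H)$. That block is exactly $F-V$ from the Next Generation Matrix computation but with $\beta$ replaced by $\kappa = \beta S_2$; since $\kappa(\tfrac{1-\theta}{\gamma_I}+\tfrac{\theta}{\gamma_C}) = 1$, this block has spectral abscissa exactly $0$ (it is on the boundary of stability, reflecting $\mathcal{R}(S_2)=S_2\mathcal{R}_0 = 1$), with eigenvalues $-\gamma_H$ and $\{0, \beta S_2 - \gamma_I - \gamma_C\}$ where the latter quantity is negative by Lemma \ref{lemma1} (note $\kappa = \beta S_2 < \beta < \gamma_I+\gamma_C$ whenever... actually one must be careful here, since $\mathcal{R}_0>1$ Lemma \ref{lemma1} does not apply to $\beta$ directly, but $\beta S_2 = \beta/\mathcal{R}_0 = \kappa$ and $\kappa\big(\tfrac{1-\theta}{\gamma_I}+\tfrac{\theta}{\gamma_C}\big)=1$ forces $\kappa < \gamma_I+\gamma_C$ by the same estimate as in Lemma \ref{lemma1} applied to $\kappa$ in place of $\beta$). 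So $J_0$ has a double zero eigenvalue (one from the first coordinate, one from the $3\times 3$ block) and the rest strictly in the left half-plane.

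The crux of the argument is therefore a careful perturbation analysis of the two eigenvalues of $J|_{\mathbf{x}=\mathbf{x_2}}$ that bifurcate from $0$ as $\varepsilon$ is switched on; I expect this to be the main obstacle. The cleanest route is to work with the characteristic polynomial $p(\lambda) = \det(\lambda I - J|_{\mathbf{x}=\mathbf{x_2}})$ and apply the Routh–Hurwitz criterion, checking that all coefficients and the relevant Hurwitz determinants are positive for $\varepsilon$ small. Because the $\varepsilon^0$ polynomial is $\lambda^2(\lambda+\gamma_H)(\lambda + \gamma_I+\gamma_C - \kappa)$, the two lowest-order coefficients $a_3, a_4$ of $p$ will vanish at $\varepsilon=0$ and be $\mathcal{O}(\varepsilon)$; the delicate point is to extract their leading-order signs and verify $a_4>0$ and $a_1 a_2 a_3 > a_3^2 + a_1^2 a_4$ (the $n=4$ Hurwitz condition) to leading order in $\varepsilon$. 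Alternatively, and perhaps more transparently, one can separate timescales directly: pass to the slow variable, reduce to the dynamics on the critical manifold near $S=S_2$, and observe that the reduced (one-dimensional slow) flow is a contraction toward $S_2$ — this is essentially the statement that the endemic equilibrium is a stable fixed point of the slow reduced system, which will be developed in Section \ref{section3}. I would present the Routh–Hurwitz computation as the main line and remark that it is consistent with the fast–slow picture. A final bookkeeping step confirms that the two nonzero, strictly-left-half-plane eigenvalues of $J_0$ persist under the small perturbation by continuity, completing the verification that $\mathrm{Re}\,\lambda_i < 0$ for all $i$.
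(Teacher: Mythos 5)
Your setup and overall strategy coincide with the paper's (linearize at $\mathbf{x_2}$ using $S_2=1/\mathcal{R}_0$ and $\beta(I_2+C_2)=\varepsilon(\mathcal{R}_0-1)$ up to $\mathcal{O}(\varepsilon^2)$, then control the spectrum of the characteristic polynomial for small $\varepsilon$), and your structural observations are correct: at $\varepsilon=0$ the linearization has eigenvalues $\{0,0,-\gamma_H,\,\beta S_2-\gamma_I-\gamma_C\}$ with $\beta S_2<\gamma_I+\gamma_C$ by the Lemma \ref{lemma1} estimate applied to $\beta S_2$. But the proposal stops exactly where the theorem lives. You correctly identify that two eigenvalues bifurcate from the double zero and that one must check $a_3>0$, $a_4>0$ and the quartic Hurwitz inequality $a_1a_2a_3>a_3^2+a_1^2a_4$ at leading order in $\varepsilon$ — and then you do not check them. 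Since the unperturbed matrix is only marginally stable, no continuity argument can substitute for this computation; the sign of those $\mathcal{O}(\varepsilon)$ quantities is the entire content of the statement. This is precisely the work the paper does: it factors $p(\lambda)=(\lambda+\gamma_H)(\lambda^3+a\lambda^2+b\lambda+c)$ with $a,b,c>0$ (already $b>0$ needs $\beta/\mathcal{R}_0<\gamma_I+\gamma_C$), and, instead of Routh–Hurwitz, uses Vieta's formula to reduce the question to $q(-a)<0$ and verifies that explicitly; at leading order your missing Hurwitz condition is equivalent to the same inequality, namely $\bigl(\gamma_I+\gamma_C-\tfrac{\beta}{\mathcal{R}_0}\bigr)\bigl(\mathcal{R}_0(\gamma_I+\gamma_C)-\tfrac{\beta}{\mathcal{R}_0}\bigr)>\gamma_I\gamma_C(\mathcal{R}_0-1)$. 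This inequality is true (for instance, it holds with equality margin $\bigl(\gamma_I+\gamma_C-\beta\bigr)^2>0$ at $\mathcal{R}_0=1$ and the left-hand side grows faster in $\beta$ than the right-hand side, since $(\gamma_I+\gamma_C-\beta/\mathcal{R}_0)(\gamma_I+\gamma_C)>\gamma_I\gamma_C$), but it must be proved; as written, your argument proves nothing beyond the $\varepsilon=0$ marginal case.

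A second concern is the "more transparent" alternative you sketch: reducing to the slow flow on the critical manifold and claiming it contracts toward $S_2$. That route fails. The EE is $\mathcal{O}(\varepsilon)$-close to, but not on, $\mathcal{C}_0$, and by Lemma \ref{lemma2} normal hyperbolicity is lost exactly at $S=1/\mathcal{R}_0=S_2$ (there $\lambda_3=0$), so Fenichel/Tikhonov reduction gives no information in a neighborhood of the EE; moreover the reduced slow flow (\ref{eq11}) is $S'=1-S$, which pushes $S$ monotonically toward $1$, past $S_2$ — it is not a contraction toward $S_2$. So that remark should be dropped, and the proof completed by actually carrying out the leading-order sign verification (either your Hurwitz determinant or the paper's $q(-a)<0$ argument).
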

\begin{proof}
    We will neglect the $\mathcal{O}(\varepsilon^2)$ terms because $\varepsilon \ll 1$. Since $I_2+C_2 = \varepsilon (\mathcal{R}_0-1)/\beta$, the Jacobian matrix of (\ref{eq2}) computed in the EE $\mathbf{x_2}$ is 
    \begin{equation*}
        J|_{\mathbf{x}=\mathbf{x_2}} = 
        \begin{bmatrix}
            -\varepsilon(\mathcal{R}_0-1)-\varepsilon & -\frac{\beta}{\mathcal{R}_0}-\varepsilon & -\frac{\beta}{\mathcal{R}_0}-\varepsilon & -\varepsilon \\ 
            \varepsilon(1-\theta)(\mathcal{R}_0-1) & (1-\theta) \frac{\beta}{\mathcal{R}_0}-\gamma_I & (1-\theta) \frac{\beta}{\mathcal{R}_0} & 0 \\ 
            \varepsilon\theta(\mathcal{R}_0-1) & \theta \frac{\beta}{\mathcal{R}_0} & \theta \frac{\beta}{\mathcal{R}_0}-\gamma_C & 0 \\ 
            0 & 0 & \gamma_C & -\gamma_H \\ 
        \end{bmatrix}. 
    \end{equation*}
    Its characteristic polynomial is 
    \begin{equation*}
        p(\lambda)=(\lambda+\gamma_H) (\lambda^3+a\lambda^2+b\lambda+c) \eqqcolon (\lambda+\gamma_H) \; q(\lambda), 
    \end{equation*}
    where 
    \begin{equation*}
    \begin{split}
        a&=\frac{\theta \gamma_I^2 + (1-\theta)\gamma_C^2}{\theta \gamma_I + (1-\theta)\gamma_C} + \varepsilon \mathcal{R}_0, \\ 
        b&=\varepsilon \left( \mathcal{R}_0 (\gamma_I+\gamma_C) - \frac{\beta}{\mathcal{R}_0} \right), \\ 
        c&=\varepsilon \gamma_I \gamma_C (\mathcal{R}_0 - 1).  \\ 
    \end{split}
    \end{equation*}
    Obviously one eigenvalue is $\lambda_1=-\gamma_H < 0$. Since $a,b,c>0$, there are no positive real eigenvalues and there exists at least one negative real eigenvalue $\lambda_2<0$. We would like to show that also the remaining two eigenvalues $\lambda_{3,4}$ have a negative real part, from this the thesis would follow from Lyapunov's Indirect Method. If $\lambda_{3,4} \in \mathbb{R}$ then they are necessarily negative. Suppose that 
    \begin{equation*}
        \lambda_{3,4}=\alpha \pm i \beta 
    \end{equation*} 
    with $\alpha, \beta \in \mathbb{R}$. The Vieta's formula tells us that 
    \begin{equation*}
        \lambda_2 + 2 \alpha = -a, 
    \end{equation*}
    therefore 
    \begin{equation*}
        \alpha < 0 \Longleftrightarrow -a < \lambda_2 \Longleftrightarrow q(-a) < 0. 
    \end{equation*}
    Direct calculations show that 
    \begin{equation*}
        q(-a) = -\varepsilon \left(\mathcal{R}_0 \gamma_I + \mathcal{R}_0 \gamma_C -\frac{\beta}{\mathcal{R}_0}\right) \left(\frac{\theta \gamma_I^2 + (1-\theta)\gamma_C^2}{\theta \gamma_I + (1-\theta)\gamma_C}\right) + \varepsilon \gamma_I \gamma_C (\mathcal{R}_0-1) 
    \end{equation*}
    and that it is always negative. 
\end{proof}

\subsection{Bifurcation analysis}

\noindent For the bifurcation analysis we focus on the role of $\beta$, the rate at which the susceptible are infected by individuals characterized by a standard or critical course of the disease. Two situations must be distinguished: if $\mathcal{R}_0<1$ then only the DFE $\mathbf{x_1}$ exists and it is asymptotically stable; if $\mathcal{R}_0>1$ both the DFE $\mathbf{x_1}$ and the EE $\mathbf{x_2}$ exist, and the first one is unstable while the second one stable. Define 
\begin{equation*}
    \bar{\mathcal{R}}_0 \coloneqq \frac{\mathcal{R}_0}{\beta}=\frac{1-\theta}{\gamma_I}+\frac{\theta}{\gamma_C}, 
\end{equation*}
the two previous situations correspond to $\beta<1/\bar{\mathcal{R}}_0$ and $\beta>1/\bar{\mathcal{R}}_0$, respectively. Notice that the DFE $\mathbf{x_1}$ does not depend on $\beta$, while the EE $\mathbf{x_2}$ depends on it only through the basic reproduction number $\mathcal{R}_0$. Therefore define for every $X \in \{I,C,H\}$ 
\begin{equation*}
    p_{X_2} \coloneqq \frac{X_2}{1-\frac{1}{\mathcal{R}_0}},  
\end{equation*} 
which are independent of $\beta$. Figure \ref{f2} shows the bifurcation diagrams. The model exhibits a transcritical bifurcation at $\beta=1/\bar{\mathcal{R}}_0$ between the DFE and the EE. 

\begin{figure}
    \centering
    \begin{subfigure}{0.47\textwidth}
        \centering
        \includegraphics[width=\textwidth]{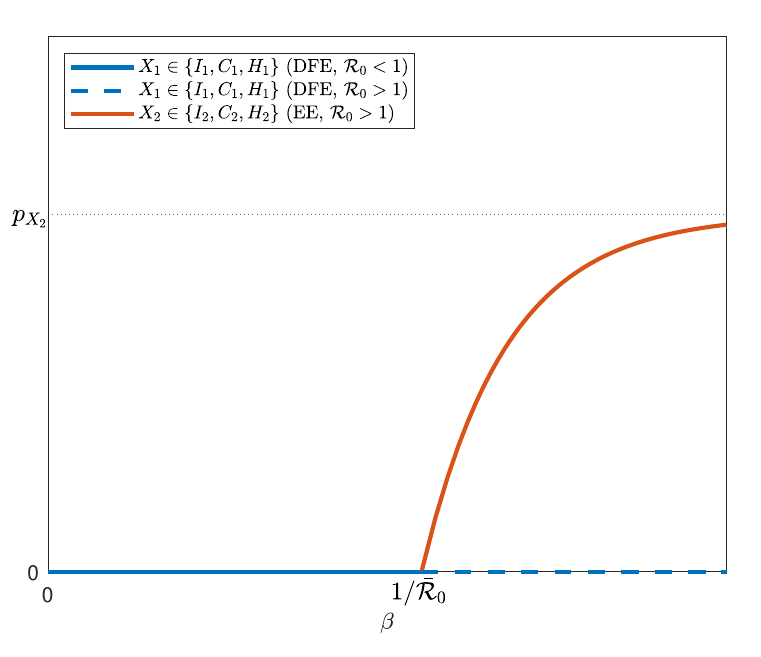}
        \caption{}
    \end{subfigure}
    \hspace{5mm}
    \begin{subfigure}{0.475\textwidth}
        \centering
        \includegraphics[width=\textwidth]{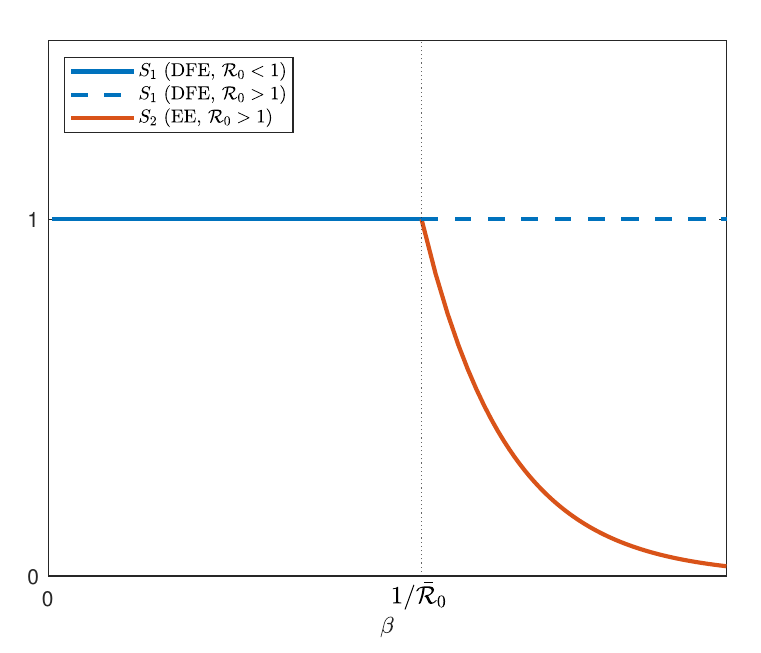}
        \caption{}
    \end{subfigure}
    \caption{Bifurcation analysis of system (\ref{eq2}), with a focus on the role of $\beta$, highlighting a transcritical bifurcation related to the exchange of stability between the DFE and the EE. Straight lines correspond to stable equilibria, while dashed lines to unstable ones. (A) Focus on the compartments $I$, $C$, and $H$, which all exhibit the same behavior. (B) Focus on the compartment $S$.} \label{f2}
\end{figure}

\section{Multiple timescale analysis} \label{section3}

\noindent In this section we analyze the two-timescale structure of system (\ref{eq2}). We restrict our analysis to the case $\mathcal{R}_0>1$, which corresponds to the spread of the epidemic. In order to highlight the two-timescale nature of system (\ref{eq2}), we write it as 
\begin{equation} \label{eq4}
    \dot{\mathbf{x}}=\mathbf{F_1}(\mathbf{x})+\varepsilon \; \mathbf{F_2}(\mathbf{x}), 
\end{equation}
where 
\begin{equation} \label{eq6}
    \mathbf{x}=
    \begin{bmatrix}
        S \\ I \\ C \\ H 
    \end{bmatrix}, \;
    \mathbf{F_1}(\mathbf{x})=
    \begin{bmatrix}
        -\beta S (I+C) \\ (1-\theta) \beta S(I+C) - \gamma_I I \\ 
        \theta \beta S (I+C) - \gamma_C C \\ \gamma_C C - \gamma_H H
    \end{bmatrix}, \; 
    \mathbf{F_2}(\mathbf{x})=
    \begin{bmatrix}
        1-S-I-C-H \\ 0 \\ 0 \\ 0
    \end{bmatrix}. 
\end{equation}

\subsection{Preliminaries on Geometric Singular Perturbation Theory} \label{GSPT} 

\noindent We start by providing a brief description of Geometric Singular Perturbation Theory (GSPT; \cite{2,23,40,0}), and in particular of the entry-exit function \cite{35}. Both of them will be fundamental to study the combination of the two different timescales of system (\ref{eq4}).

Consider the so-called fast-slow system in standard form 
\begin{align} \label{fss}
\left\{
\begin{aligned}
    \varepsilon \; \mathbf{x}' &= \mathbf{f}(\mathbf{x}, \mathbf{y}, \varepsilon), \\ 
    \mathbf{y}' &= \mathbf{g}(\mathbf{x}, \mathbf{y}, \varepsilon),
\end{aligned}
\right.
\end{align}
where $\mathbf{x}\in \mathbb{R}^n$, $\mathbf{y}\in \mathbb{R}^m$, $\mathbf{f}\colon \mathbb{R}^{n+m+1} \to \mathbb{R}^n$, $\mathbf{g}\colon \mathbb{R}^{n+m+1} \to \mathbb{R}^m$, $\mathbf{f}$ and $\mathbf{g}$ are $C^r$-smooth for some $r$ sufficiently large, and $0< \varepsilon \ll 1$ is a small parameter. The variable $\mathbf{x}$ is called fast variable while the variable $\mathbf{y}$ is called slow variable. System (\ref{fss}) is formulated on the slow timescale $\tau$, and the $'$ indicates the derivative with respect to $\tau$. By defining the fast time $t=\tau/\varepsilon$ it can be rewritten as
\begin{align} \label{fss_bis}
\left\{
\begin{aligned}
    \dot{\mathbf{x}} &= \mathbf{f}(\mathbf{x}, \mathbf{y}, \varepsilon), \\ 
    \dot{\mathbf{y}} &= \varepsilon \; \mathbf{g}(\mathbf{x}, \mathbf{y}, \varepsilon),
\end{aligned}
\right.
\end{align}
where the $^\cdot$ indicates the derivative with respect to $t$. The slow subsystem is defined by considering $\varepsilon=0$ in (\ref{fss}), which yields 
\begin{align} \label{fss_0}
\left\{
\begin{aligned}
    \mathbf{0} &= \mathbf{f}(\mathbf{x}, \mathbf{y}, 0), \\ 
    \mathbf{y}' &= \mathbf{g}(\mathbf{x}, \mathbf{y}, 0).
\end{aligned}
\right.
\end{align}
The slow flow defined by (\ref{fss_0}) is restricted to the critical manifold 
\begin{equation*}
    \mathcal{C}_0 \coloneqq \{(\mathbf{x}, \mathbf{y})\in \mathbb{R}^{n+m} \colon \; \mathbf{f}(\mathbf{x}, \mathbf{y})=\mathbf{0}\}, 
\end{equation*}
whose points are the equilibria of the fast subsystem 
\begin{align*} 
\left\{
\begin{aligned}
    \dot{\mathbf{x}} &= \mathbf{f}(\mathbf{x}, \mathbf{y}, 0), \\ 
    \dot{\mathbf{y}} &= \mathbf{0}.
\end{aligned}
\right.
\end{align*}
We provide now two definitions which will be fundamental for the analysis of our model \cite{40}. 

\begin{definition}
    A subset $\mathcal{M}_0 \subset \mathcal{C}_0$ is called normally hyperbolic if the $n \times n$ matrix $\textnormal{D}_\mathbf{x}\mathbf{f}(\mathbf{x}, \mathbf{y}, 0)$ of first partial derivatives with respect to the fast variables has no eigenvalues with zero real part for all $(\mathbf{x}, \mathbf{y}) \in \mathcal{M}_0$.
\end{definition}

\begin{definition}
    A normally hyperbolic subset $\mathcal{M}_0 \subset \mathcal{C}_0$ is called attracting if all eigenvalues of $\textnormal{D}_\mathbf{x}\mathbf{f}(\mathbf{x}, \mathbf{y}, 0)$ have negative real part for all $(\mathbf{x}, \mathbf{y}) \in \mathcal{M}_0$; similarly, $\mathcal{M}_0$ is called repelling if all eigenvalues have positive real part. If $\mathcal{M}_0$ is normally
    hyperbolic and neither attracting nor repelling, it is of saddle type.
\end{definition}

A basic result of GSPT is Fenichel's Theorem \cite[Theorem 3.1.4]{40} (see also \cite{23}). 

\begin{theorem}[Fenichel] \label{Fen}
    Consider a compact submanifold (possibly with boundary) $\mathcal{M}_0$ of the critical manifold $\mathcal{C}_0$. If $\mathcal{M}_0$ is normally hyperbolic, then for $\varepsilon>0$ sufficiently small, the following hold:
    \begin{enumerate}
        \item there exists a locally invariant manifold $\mathcal{M}_\varepsilon$, called slow manifold,  diffeomorphic to $\mathcal{M}_0$ (local invariance means that trajectories can enter or leave $\mathcal{M}_\varepsilon$ only through its boundaries); 
        \item $\mathcal{M}_\varepsilon$ is $\mathcal{O}(\varepsilon)$-close to $\mathcal{M}_0$; 
        \item the flow on $\mathcal{M}_\varepsilon$ converges to the slow flow as $\varepsilon \to 0$; 
        \item $\mathcal{M}_\varepsilon$ is $C^r$-smooth; 
        \item $\mathcal{M}_\varepsilon$ is normally hyperbolic and has the same stability properties with respect
        to the fast variables as $\mathcal{M}_0$ (attracting, repelling, or of saddle type); 
        \item $\mathcal{M}_\varepsilon$ is usually not unique but all the possible choices lie $\mathcal{O}(\exp(-D/\varepsilon))$-close to
        each other, for some $D > 0$; 
        \item the stable and unstable manifolds of $\mathcal{M}_\varepsilon$ are locally invariant and are also $\mathcal{O}(\varepsilon)$-close and diffeomorphic to the stable and unstable manifolds of $\mathcal{M}_0$. 
    \end{enumerate}
\end{theorem}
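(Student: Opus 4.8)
The statement is the classical Fenichel theorem, so rather than reproving it I will only outline the strategy I would follow, which is the one in \cite{40,23}. The plan is to trade the singular dependence on $\varepsilon$ for a non-singular autonomous problem. First I would rewrite (\ref{fss}) on the fast timescale as in (\ref{fss_bis}) and append the trivial equation $\dot{\varepsilon}=0$, obtaining the autonomous system
\begin{align*}
\left\{
\begin{aligned}
    \dot{\mathbf{x}} &= \mathbf{f}(\mathbf{x},\mathbf{y},\varepsilon), \\
    \dot{\mathbf{y}} &= \varepsilon\,\mathbf{g}(\mathbf{x},\mathbf{y},\varepsilon), \\
    \dot{\varepsilon} &= 0
\end{aligned}
\right.
\end{align*}
on an open subset of $\mathbb{R}^{n+m+1}$. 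At $\varepsilon=0$ every point of $\mathcal{C}_0$ is an equilibrium, so $\mathcal{C}_0\times\{0\}$ is a manifold of equilibria whose transverse linearization is $\textnormal{D}_{\mathbf{x}}\mathbf{f}(\mathbf{x},\mathbf{y},0)$; normal hyperbolicity of $\mathcal{M}_0$ says exactly that this block keeps a uniform spectral gap off the imaginary axis over $\mathcal{M}_0$, while the $(\mathbf{y},\varepsilon)$ directions are neutral. Hence $\mathcal{M}_0\times\{0\}$ is a compact normally hyperbolic invariant manifold of center type for the extended flow, and $\mathcal{M}_\varepsilon$ will be recovered by slicing the perturbed center-type manifold at a fixed small $\varepsilon$.

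Since $\mathcal{M}_0$ is only a compact manifold with boundary, I would next modify the extended vector field outside a small tubular neighbourhood of $\mathcal{M}_0\times\{0\}$ by a smooth cutoff, so that the modified field is globally defined, agrees with the original one near $\mathcal{M}_0$, and makes $\mathcal{M}_0\times\{0\}$ the maximal bounded invariant set; this standard device is what turns a manifold with boundary into an overflowing/inflowing invariant manifold, and is the source both of the \emph{local} invariance in item~(1) and of the non-uniqueness in item~(6). Then I would straighten $\mathcal{C}_0$ to a piece of $\{\mathbf{x}=\mathbf{0}\}$ via the coordinate change furnished by the Implicit Function Theorem (legitimate because $\textnormal{D}_{\mathbf{x}}\mathbf{f}$ is invertible on $\mathcal{M}_0$) and look for the perturbed manifold as a graph $\mathbf{x}=h(\mathbf{y},\varepsilon)$ with $h(\cdot,0)\equiv\mathbf{0}$. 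Invariance of this graph is equivalent to a fixed-point equation for $h$, set up either through the graph transform (push a Lipschitz graph forward by the time-one map, re-express it as a graph) or through the Lyapunov--Perron integral operator built from the variation-of-constants formula adapted to the exponential dichotomy of $\textnormal{D}_{\mathbf{x}}\mathbf{f}$. On the Banach space of Lipschitz graphs with small Lipschitz constant supported in the cutoff region, this operator is a uniform contraction whose rate is governed by the spectral gap; its unique fixed point defines $\mathcal{M}_\varepsilon$ and is automatically Lipschitz, giving item~(1).

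For smoothness (item~(4)) I would bootstrap $h$ from Lipschitz to $C^r$ by the fiber contraction theorem, differentiating the fixed-point equation and checking that the induced maps on the jets are again contractions as long as the spectral gap dominates $r$ times the (small) growth along the center directions; I expect this quantitative step to be the main obstacle, since it is where normal hyperbolicity is genuinely used and where the uniformity in $\varepsilon$ must be tracked carefully. Items~(2)--(3) then follow because $h(\cdot,0)\equiv\mathbf{0}$ reproduces $\mathcal{C}_0$ in the straightened coordinates, so smoothness in $\varepsilon$ gives $h(\mathbf{y},\varepsilon)=\mathcal{O}(\varepsilon)$ and hence $\mathcal{M}_\varepsilon$ is $\mathcal{O}(\varepsilon)$-close to $\mathcal{M}_0$; restricting the flow to $\mathcal{M}_\varepsilon$ produces the reduced equation $\mathbf{y}'=\mathbf{g}(h(\mathbf{y},\varepsilon),\mathbf{y},\varepsilon)$, an $\mathcal{O}(\varepsilon)$ regular perturbation of (\ref{fss_0}), whose flow converges to the slow flow on compact time intervals.

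Finally, item~(5) is immediate because the graph-transform/Lyapunov--Perron construction preserves the sign of the real parts of the transverse spectrum. Item~(6) follows by noting that two admissible cutoffs yield manifolds agreeing on the genuine inflowing/overflowing invariant set and differing elsewhere by at most the drift a trajectory accumulates while being contracted (or expanded) at rate $e^{-\lambda t}$ over the time $\sim 1/\varepsilon$ needed to cross $\mathcal{M}_0$, i.e.\ by $\mathcal{O}(\exp(-D/\varepsilon))$ for a suitable $D>0$. And item~(7) follows by running the same graph-transform argument in the leftover hyperbolic (stable, resp.\ unstable) directions of the extended system, which yields the local stable and unstable manifolds of $\mathcal{M}_\varepsilon$ as $C^r$ graphs that are $\mathcal{O}(\varepsilon)$-close and diffeomorphic to their $\varepsilon=0$ counterparts by the same smooth-dependence argument.
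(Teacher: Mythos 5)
This is a classical background theorem that the paper itself does not prove but imports by citation from the standard GSPT references, and your outline reproduces exactly the strategy of those references (extended system with $\dot{\varepsilon}=0$, cutoff to handle the boundary, graph transform/Lyapunov--Perron fixed point, fiber contraction for $C^r$ smoothness, exponentially small non-uniqueness from the choice of modification). So your proposal is consistent with the paper's treatment and with the cited proofs; no gap to report.
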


Note that point (6) of Theorem \ref{Fen} implies that the choice of the slow manifold $\mathcal{M}_\varepsilon$ does not change analytical and numerical results.

As we mentioned above, fast–slow systems like (\ref{fss}) and (\ref{fss_bis}) are said to be in standard form. In a more general context, it is possible
to analyze a fast–slow system in non-standard form given by \cite{0}
\begin{equation}\label{eq:nonstand}
    \dot{\mathbf{z}} = \mathbf{F}(\mathbf{z},\varepsilon), 
\end{equation}
with $\mathbf{z} \in \mathbb{R}^{n+m}$, $\mathbf{F}\colon \mathbb{R}^{n+m+1} \to \mathbb{R}^{n+m}$, and $\mathbf{F} \in C^r$, where the timescale separation is not explicit nor global. A system in the form (\ref{eq:nonstand}) is singularly perturbed if the set
\begin{equation*}
    \mathcal{C}_0:=\{ \mathbf{z}\in  \mathbb{R}^{n+m} \colon \; \mathbf{F}(\mathbf{z},0)=\mathbf{0}\}
\end{equation*}
is non-empty, nor consists of isolated singularities. In particular, system (\ref{eq4}) is in such non-standard form. However, sufficiently close to the critical manifold, we will be able to introduce a change of coordinates that brings our system in standard form, in order to apply the results presented in this subsection.

\begin{figure}[h!]
    \includegraphics[width=10cm]{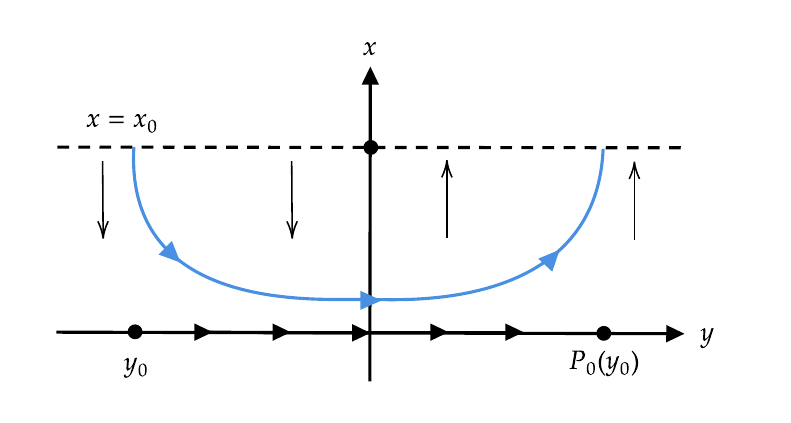}
    \caption{Visualization of the entry-exit map (\ref{e-e-f}) on the line $\{x=x_0\}$ with $x_0 \in \mathcal{O}(\varepsilon)$.} \label{f-1}
\end{figure}

Consider now the planar system 
\begin{align} \label{planar}
\left\{
\begin{aligned}
    \dot{x} &= x \; f(x,y,\varepsilon), \\ 
    \dot{y} &= \varepsilon \; g(x,y,\varepsilon),
\end{aligned}
\right.
\end{align}
with $(x,y) \in \mathbb{R}^2$, $g(0,y,0)>0$, and $\textnormal{sign}(f(0,y,0))=\textnormal{sign}(y)$. Notice that for $\varepsilon=0$ the $y$-axis consists of attracting/repelling equilibria if $y$ is negative/positive, respectively. Consider an orbit starting at $(x_0,y_0)$ for $x_0 \in \mathcal{O}(\varepsilon)$ and $y_0<0$ (see Figure \ref{f-1}). Intuitively, we expect that it is attracted to the $y$-axis as long as $y<0$ and that it will be then repelled away when $y>0$. Note that, since $g(0,y,0)>0$, we expect the $y$-coordinate of the orbit to grow during this process. However, since the $y$-axis is not normally hyperbolic, Fenichel's Theorem \ref{Fen} cannot explain this behavior since it could be applied only to a submanifold $\mathcal{M}_0$ of such axis away from the origin. On the other hand, the entry-exit function \cite{35} gives, in the form of a Poincaré map, an estimate of the behavior of such orbits near the origin. Consider the horizontal line $\{x=x_0\}$, the orbit of Figure \ref{f-1} re-intersects that line for $y=P_0(y_0) + \mathcal{O}(\varepsilon)$, where $P_0(y_0)$ is defined implicitly as the non-trivial solution to \cite{34} 
\begin{equation} \label{e-e-f}
    \int_{y_0}^{P_0(y_0)} \frac{f(0,y,0)}{g(0,y,0)}dy=0.
\end{equation}
It is important to remark that (\ref{e-e-f}) provides an approximation of the exit point, indeed it describes the orbits close to the $y$-axis through the flow on the $y$-axis itself (such description is only valid for $\varepsilon$ sufficiently small, see point (3) of Fenichel's Theorem \ref{Fen}). Let $dy=g(0,y,0) \; d\tau$, since the function $g$ describes the growth of the $y$-coordinate, one can transform (\ref{e-e-f}) into an integral equation which provides the (slow) exit time $\tau_E$: 
\begin{equation} \label{eqtime}
    \int_0^{\tau_E} f(0,y(\tau),0) \; d\tau = 0. 
\end{equation}
On the $y$-axis, the eigenvalues of the Jacobian of the fast subsystem of (\ref{planar}) are $\lambda_1=f(0,y,0)$ and $\lambda_2=0$. The former is associated to the fast variable $x$, while the latter to the slow variable $y$ (and it is obviously equal to zero since we are looking at the fast flow). Therefore (\ref{eqtime}) is equivalent to 
\begin{equation} \label{eqtime_eig}
    \int_0^{\tau_E} \lambda_1|_{y(\tau)} \; d\tau = 0. 
\end{equation}
Notice that $\lambda_1(y)<0$ if $y<0$, while $\lambda_1(y)>0$ if $y>0$. Indeed it describes the change of stability of the $y$-axis.  

\subsection{Fast formulation} \label{subfast}

\noindent Setting $\varepsilon=0$, we obtain the fast subsystem of (\ref{eq4}) 
\begin{align} \label{eq5}
\left\{
\begin{aligned}
    \dot{S} &= -\beta S (I+C), \\ 
    \dot{I} &= (1-\theta)\beta S (I+C) - \gamma_I I, \\ 
    \dot{C} &= \theta \beta S(I+C) - \gamma_C C, \\ 
    \dot{H} &= \gamma_C C - \gamma_H H.
\end{aligned}
\right.
\end{align}
The critical manifold $\mathcal{C}_0$ of (\ref{eq4}) is defined as the set of the equilibria of (\ref{eq5}), namely 
\begin{equation*}
    \mathcal{C}_0 \coloneqq \{(S,I,C,H) \in \mathbb{R}^4 \colon \; I=C=H=0\}. 
\end{equation*}
Notice that the first three equations of (\ref{eq5}) do not depend on $H$, which can therefore be written as 
\begin{equation*}
    H=H(t)=\left(H_0 + \gamma_C \int_0^t \exp(\gamma_H s) C(s) \; ds\right) \exp(-\gamma_H t), 
\end{equation*}
where $H_0=H(0)$. 

Let $X \in \{S,I,C,H\}$, denote with $X_0$ its initial condition, i.e., at the beginning of the fast flow. Moreover, define $X_\infty$ as 
\begin{equation*}
    X_\infty \coloneqq \lim_{t \to \infty} X(t), 
\end{equation*}
when this limit exists (under the flow of (\ref{eq5})). 

\begin{proposition}
    Suppose that $\mathcal{R}_0>1$. The trajectories of system (\ref{eq5}) converge to $\mathcal{C}_0$ as $t \to \infty$. 
\end{proposition}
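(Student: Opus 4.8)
The plan is to exploit a mass-balance Lyapunov quantity for the fast subsystem (\ref{eq5}). Consider $V \coloneqq S + I + C$. Summing the first three equations of (\ref{eq5}), the infection terms $\beta S (I+C)$ cancel exactly (they merely reshuffle mass among $S$, $I$, $C$), so $\dot V = -\gamma_I I - \gamma_C C \le 0$. Solutions starting in $\Delta$ stay in $\Delta$ for all $t \ge 0$ (the boundary argument proving forward invariance of $\Delta$ applies verbatim with $\varepsilon = 0$, and $\Delta$ is compact), so $V$ is bounded below by $0$; hence $V(t)$ is nonincreasing and converges to some $V_\infty \ge 0$, and integrating gives $\int_0^\infty (\gamma_I I(t) + \gamma_C C(t))\, dt = V(0) - V_\infty < \infty$.

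Next I would show that $I + C \to 0$. Since $\gamma_I, \gamma_C > 0$, finiteness of that integral yields $\int_0^\infty (I(t) + C(t))\, dt < \infty$. On the other hand $\dot S = -\beta S (I+C) \le 0$, so $S$ is also nonincreasing and bounded below, hence $S(t) \to S_\infty \ge 0$; therefore $I + C = V - S$ converges to $L \coloneqq V_\infty - S_\infty \ge 0$. If $L > 0$ then $I + C \ge L/2$ for all large $t$, contradicting integrability; so $L = 0$, and since $I, C \ge 0$ this forces $I(t) \to 0$ and $C(t) \to 0$.

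Finally, for $H$: from $\dot H = \gamma_C C - \gamma_H H$ and variation of constants, $H(t) = H_0 e^{-\gamma_H t} + \gamma_C \int_0^t e^{-\gamma_H (t-s)} C(s)\, ds$; splitting the convolution past a time after which $C$ is small (equivalently, the elementary bound $\limsup_{t \to \infty} H(t) \le (\gamma_C/\gamma_H) \limsup_{t \to \infty} C(t) = 0$) gives $H(t) \to 0$. Hence every trajectory of (\ref{eq5}) tends to the point $(S_\infty, 0, 0, 0) \in \mathcal{C}_0$, which is precisely convergence to $\mathcal{C}_0$.

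I do not expect a genuine obstacle. The one point requiring care is that an integrable nonnegative function need not vanish at infinity; I avoid invoking Barbalat's lemma by observing that $I + C = V - S$ is a difference of two convergent monotone functions, so integrability together with convergence immediately pins the limit at $0$. Note also that the hypothesis $\mathcal{R}_0 > 1$ is not actually used in this argument — it is inherited from the standing assumption of Section \ref{section3}.
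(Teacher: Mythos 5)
Your proof is correct and follows essentially the same route as the paper: exhibit nonincreasing combinations of compartments (you use $S+I+C$ together with $S$, the paper uses $S+I$ and $S+C$), integrate to get $\int_0^\infty (I+C)\,dt<\infty$, combine with convergence of the monotone quantities to force $I,C\to 0$, and then kill $H$ via the variation-of-constants formula. The paper leaves the $H\to 0$ step as ``obvious'' and, like you, never actually uses $\mathcal{R}_0>1$, so your write-up is a faithful (and slightly more explicit) version of the same argument.
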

\begin{proof}
    Notice that the solutions of the fast system (\ref{eq5}) evolve inside $\Delta$ given by (\ref{eq3}). Since $\dot{S} \le 0$ there exists $S_\infty \in [0, S_0]$. Moreover, $\dot{S}+\dot{I}, \dot{S}+\dot{C} \le 0$, therefore there exist also $I_\infty, C_\infty \ge 0$. Integrating $\dot{S}+\dot{I}$ and $\dot{S}+\dot{C}$, we respectively obtain 
    \begin{equation*}
    \begin{split}
        - \infty & < S_\infty + I_\infty - S_0 - I_0 = \int_0^\infty \left(\dot{S}(t)+\dot{I}(t)\right) \; dt \\ 
        & = -\theta \int_0^\infty S(t) \left(I(t)+C(t)\right) dt - \gamma_I \int_0^\infty I(t) \; dt < 0
    \end{split}
    \end{equation*}
    and 
    \begin{equation*}
    \begin{split}
        - \infty & < S_\infty + C_\infty - S_0 - I_0 = \int_0^\infty \left(\dot{S}(t)+\dot{C}(t)\right) \; dt \\ 
        & = -(1-\theta) \int_0^\infty S(t) \left(I(t)+C(t)\right) dt - \gamma_C \int_0^\infty C(t) \; dt < 0, 
    \end{split}
    \end{equation*}
    therefore $I_\infty=C_\infty=0$. Obviously there exists also $H_\infty=0$. 
\end{proof} 

\begin{lemma} \label{lemma2}
    Suppose that $\mathcal{R}_0>1$. Consider the $I$, $C$, and $H$ equations (see $\mathbf{F_1}$ in (\ref{eq6})) 
    \begin{equation*} 
    \begin{bmatrix}
        \dot{I} \\ \dot{C} \\ \dot{H} 
    \end{bmatrix}
    =
    \begin{bmatrix}
        (1-\theta) \beta S-\gamma_I & (1-\theta)\beta S  & 0 \\  
        \theta \beta S & \theta \beta S - \gamma_C & 0 \\ 
        0 & \gamma_C & -\gamma_H
    \end{bmatrix}
    \begin{bmatrix}
       I \\ C \\ H
    \end{bmatrix}. 
    \end{equation*}
    The eigenvalues of the matrix above are 
    \begin{equation} \label{eq7}
    \begin{split}
        &\lambda_1=-\gamma_H<0, \\ 
        &\lambda_{2} = \frac{1}{2} \left(  \beta S - \gamma_I - \gamma_C - \sqrt{           
        (\beta S - (\gamma_C - \gamma_I))^2 + 4(1-\theta) \beta (\gamma_C - \gamma_I) S} \right), \\ 
        &\lambda_{3} = \frac{1}{2} \left(  \beta S - \gamma_I - \gamma_C + \sqrt{           
        (\beta S - (\gamma_C - \gamma_I))^2 + 4(1-\theta) \beta (\gamma_C - \gamma_I) S} \right).
    \end{split}
    \end{equation}
    In particular, $\lambda_2 < 0 $ for all $S$, while 
    \begin{align} \label{eq8} 
        \begin{cases}
            \lambda_3 < 0 \quad & \textnormal{if} \; S<\frac{1}{\mathcal{R}_0}, \\ 
            \lambda_3 = 0 & \textnormal{if} \; S=\frac{1}{\mathcal{R}_0}, \\ 
            \lambda_3 > 0 & \textnormal{if} \; S>\frac{1}{\mathcal{R}_0}, \\ 
        \end{cases} 
    \end{align}
    and it is a strictly increasing function of $S$. 
\end{lemma}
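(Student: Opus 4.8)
The plan is to exploit the block‑triangular structure of the matrix and then reduce the whole question to a $2\times 2$ problem whose trace, determinant and discriminant can be written down explicitly in terms of $\mathcal{R}_0$.

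First, since the entries in positions $(1,3)$ and $(2,3)$ vanish, the $3\times 3$ matrix is block lower triangular: its $(I,C)$ part is the $2\times 2$ block $A$ with rows $\big((1-\theta)\beta S-\gamma_I,\ (1-\theta)\beta S\big)$ and $\big(\theta\beta S,\ \theta\beta S-\gamma_C\big)$, and the remaining diagonal block is the scalar $-\gamma_H$. Hence $\lambda_1=-\gamma_H<0$ is immediate and $\lambda_2,\lambda_3$ are the eigenvalues of $A$. A direct computation gives $\operatorname{tr}A=\beta S-\gamma_I-\gamma_C$ and, after the terms quadratic in $S$ cancel, $\det A=\gamma_I\gamma_C-\beta S\big((1-\theta)\gamma_C+\theta\gamma_I\big)$; rewriting Proposition \ref{prop1} as $(1-\theta)\gamma_C+\theta\gamma_I=\mathcal{R}_0\,\gamma_I\gamma_C/\beta$ turns this into $\det A=\gamma_I\gamma_C(1-\mathcal{R}_0 S)$, so that $\operatorname{sign}(\det A)=\operatorname{sign}(1-\mathcal{R}_0 S)$. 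The eigenvalues of $A$ are then $\lambda_{2,3}=\tfrac12\big(\operatorname{tr}A\pm\sqrt{(\operatorname{tr}A)^2-4\det A}\big)$, and expanding $(\operatorname{tr}A)^2-4\det A$ and substituting $\det A$ one verifies the algebraic identity $(\operatorname{tr}A)^2-4\det A=(\beta S-(\gamma_C-\gamma_I))^2+4(1-\theta)\beta(\gamma_C-\gamma_I)S$, which is exactly the radicand in (\ref{eq7}); since $\gamma_C\ge\gamma_I$, $\theta\le 1$ and $S\ge 0$ in the biologically relevant range, this is nonnegative, so $\lambda_2,\lambda_3$ are real and (\ref{eq7}) holds.

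For the signs, the auxiliary inequality I would isolate is $\beta/\mathcal{R}_0<\gamma_I+\gamma_C$, i.e.\ equivalently (clearing denominators) $(1-\theta)\gamma_C^2+\theta\gamma_I^2>0$, which is obvious. It implies that $S\le 1/\mathcal{R}_0$ forces $\operatorname{tr}A<0$, whereas $\operatorname{tr}A\ge 0$ forces $S>1/\mathcal{R}_0$ and hence $\det A<0$; in the first situation $\lambda_2=\tfrac12(\operatorname{tr}A-\sqrt{\cdots})<0$ trivially, and in the second $\lambda_2\lambda_3=\det A<0$ with $\lambda_3\ge\operatorname{tr}A/2>0$, so again $\lambda_2<0$ — hence $\lambda_2<0$ for every $S$. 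Finally, since $\lambda_2\lambda_3=\det A=\gamma_I\gamma_C(1-\mathcal{R}_0 S)$ and $\lambda_2<0$, the sign of $\lambda_3$ is the sign of $\mathcal{R}_0 S-1$, which is precisely (\ref{eq8}).

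It remains to show $\lambda_3$ is increasing in $S$. I would differentiate the defining relation $\lambda_3^2-\operatorname{tr}A(S)\,\lambda_3+\det A(S)=0$ with respect to $S$, using $\operatorname{tr}A'(S)=\beta$, $\det A'(S)=-\beta\big((1-\theta)\gamma_C+\theta\gamma_I\big)$ and $2\lambda_3-\operatorname{tr}A=\sqrt{(\operatorname{tr}A)^2-4\det A}\ge 0$, to get $\lambda_3'(S)\sqrt{(\operatorname{tr}A)^2-4\det A}=\beta\big(\lambda_3+\kappa\big)$ with $\kappa\coloneqq(1-\theta)\gamma_C+\theta\gamma_I$. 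Since $\kappa$ is a convex combination of $\gamma_I$ and $\gamma_C$ we have $\gamma_I\le\kappa\le\gamma_C$, and evaluating the upward parabola $\mu\mapsto\mu^2-\operatorname{tr}A\,\mu+\det A$ at $\mu=-\kappa$ gives the clean factorization $(\kappa-\gamma_I)(\kappa-\gamma_C)\le 0$; hence $-\kappa$ lies between the two roots $\lambda_2\le\lambda_3$, so $\lambda_3\ge-\kappa$ and therefore $\lambda_3'(S)\ge 0$ (strictly, away from the degenerate parameter cases $\theta\in\{0,1\}$ or $\gamma_I=\gamma_C$, where $\lambda_3$ is piecewise linear and only weakly increasing). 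The main obstacle is really Step on the signs: one must make sure $\lambda_2$ never becomes nonnegative, and the neat argument via $\lambda_2\lambda_3=\det A$ and the sign of $\operatorname{tr}A$ rests on the elementary but easy‑to‑miss estimate $\beta/\mathcal{R}_0<\gamma_I+\gamma_C$; the discriminant identity is routine but needs care because of the cancellations.
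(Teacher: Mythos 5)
Your proof is correct, and it verifies exactly the statements of Lemma \ref{lemma2}, but it packages the arguments differently from the paper, in a way worth noting. The computational skeleton is the same: block-triangular structure giving $\lambda_1=-\gamma_H$, quadratic formula for the $2\times 2$ $(I,C)$-block, and your identity $(\operatorname{tr}A)^2-4\det A$ with $\det A=\gamma_I\gamma_C(1-\mathcal{R}_0 S)$ is precisely the paper's radicand identity (\ref{eq_equiv}). The divergence is in how the signs are extracted. The paper determines the sign of $\lambda_2$ by comparing the square root with $|\beta S-\gamma_I-\gamma_C|$ through a case distinction on $\beta \lessgtr \gamma_I+\gamma_C$ and on $S$ versus $(\gamma_I+\gamma_C)/\beta$, and it obtains (\ref{eq8}) from $\lambda_3|_{S=1/\mathcal{R}_0}=0$ combined with monotonicity, the latter asserted via unspecified ``simple but long calculations'' on the explicit root formula. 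You instead read everything off Vieta: $\lambda_2<0$ from the trace/determinant dichotomy (resting on the same key inequality $1/\mathcal{R}_0<(\gamma_I+\gamma_C)/\beta$ that the paper also isolates), (\ref{eq8}) directly from $\lambda_2\lambda_3=\gamma_I\gamma_C(1-\mathcal{R}_0 S)$ without needing monotonicity at all, and monotonicity by implicit differentiation of the characteristic polynomial, whose value at $-\kappa$ factors as $(\kappa-\gamma_I)(\kappa-\gamma_C)\le 0$; this replaces the paper's long differentiation with a short, transparent argument and decouples (\ref{eq8}) from the monotonicity claim. One small remark on your final caveat: strict monotonicity genuinely fails only in the boundary case $\theta=1$ with $\gamma_C>\gamma_I$, where $\lambda_3\equiv-\gamma_I$ for $\beta S\le\gamma_C-\gamma_I$; for $\theta=0$ or $\gamma_I=\gamma_C$ the eigenvalue is linear in $S$ and strictly increasing, so your parenthetical is broader than necessary --- though it correctly flags an edge case that the lemma's blanket ``strictly increasing'' (and the paper's proof) glosses over.
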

\begin{proof}
    Direct calculations allow to deduce the expressions (\ref{eq7}). Notice that all three eigenvalues are real since the expression inside the square roots is positive, moreover 
    \begin{equation} \label{eq_equiv}
        \sqrt{           
        (\beta S - (\gamma_C - \gamma_I))^2 + 4(1-\theta) \beta (\gamma_C - \gamma_I) S} = \sqrt{(\beta S -\gamma_I - \gamma_C)^2 - 4 S \gamma_I \gamma_C \left(\frac{1}{S}-\mathcal{R}_0\right)}. 
    \end{equation} 

    In order to study the sign of $\lambda_2$, several cases must be distinguished. If $\beta < \gamma_I + \gamma_C$, then $\lambda_2$ is clearly negative. Suppose that $\beta > \gamma_I + \gamma_C$ and fix $S^*$. If $\beta S^* - \gamma_I - \gamma_C \le 0$ then $\lambda_2|_{S=S^*}<0$. Otherwise, if $S^* > (\gamma_I + \gamma_C)/\beta$, (\ref{eq_equiv}) implies that $\lambda_2|_{S=S^*}<0$ if and only if $S^* > 1/\mathcal{R}_0$. However $(\gamma_I + \gamma_C)/\beta > 1/\mathcal{R}_0$, therefore $\lambda_2 < 0$ for all $S$.

    Since $\beta/\mathcal{R}_0 - \gamma_I - \gamma_C <0$, from (\ref{eq_equiv}) it follows that $\lambda_3 |_{S=1/\mathcal{R}_0}=0$. Differentiating $\lambda_3$ (\ref{eq7}) with respect to $S$, simple but long calculations show that it is a strictly increasing function of $S$, therefore (\ref{eq8}) follows. 
\end{proof}

From Lemma \ref{lemma2} it follows that the critical manifold $\mathcal{C}_0$ is attracting for $S < 1/\mathcal{R}_0$, while it is of saddle type for $S > 1/\mathcal{R}_0$. Recall that $S_2=1/\mathcal{R}_0$. Therefore, in general, we expect that $S_\infty \in (0, 1/\mathcal{R}_0)$. However, pathological initial conditions may yield a different outcome. Indeed, notice that there are always two negative eigenvalues, therefore the critical manifold $\mathcal{C}_0$ is locally attractive along the corresponding eigenspaces. Let us start by considering $\lambda_1=-\gamma_H<0$, whose eigenspace is $E(-\gamma_H)=\textnormal{span}_\mathbb{R}\{(0,0,1)\}$. Set therefore $S_0>1/\mathcal{R}_0$, $I_0=0$, $C_0=0$, and $H_0>0$, it is clear that $(S_\infty, I_\infty, C_\infty, H_\infty)=(S_0, 0, 0, 0) \in \mathcal{C}_0$. However, any orbit lying on $E(-\gamma_H)$ is biologically irrelevant (see Subsection \ref{subsec1}), therefore we can assume that the flow evolves outside that eigenspace. Consider now the eigenspace $E(\lambda_2)$, recall that $\lambda_2<0$. Since $E(\lambda_2)$ is spanned by a vector with components of different signs, it does not lie within $\Delta$ (\ref{eq3}). This can easily be seen if $\gamma_I=\gamma_C=\gamma$, indeed in this case $\lambda_2=-\gamma$ and $E(\lambda_2)=\textnormal{span}_\mathbb{R}\{(-1,1,\gamma/(\gamma_H-\gamma))\}$. In conclusion, if the initial conditions lie inside $\Delta$ (\ref{eq3}) and $I_0+C_0>0$, then $S_\infty \in (0, 1/\mathcal{R}_0)$ (in particular, this is true if we follow Subsection \ref{subsec1}). Finally, supposing $\gamma_I=\gamma_C$, we can find an expression for $S_\infty$ and prove this fact in a different way. 

\begin{proposition}
    Assume that $\mathcal{R}_0>1$ and that $\gamma_I=\gamma_C=\gamma$, then 
    \begin{equation*}
        \Gamma(S,I,C)=\log{S} - \mathcal{R}_0 (S+I+C) 
    \end{equation*}
    is a constant of motion for system (\ref{eq5}). Moreover, if $I_0+C_0>0$ then $S_\infty \in (0, 1/\mathcal{R}_0)$. 
\end{proposition}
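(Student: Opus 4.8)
The plan is to treat the two assertions separately, doing the conserved quantity first and then using it to pin down $S_\infty$.

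\emph{First}, to see that $\Gamma$ is a constant of motion, I would differentiate it along the flow of (\ref{eq5}). Setting $T=I+C$ and using $\gamma_I=\gamma_C=\gamma$ (so that $\mathcal{R}_0=\beta/\gamma$ by Proposition \ref{prop1}), one has $\dot S=-\beta S T$ and $\dot S+\dot I+\dot C=-\gamma T$, whence
\[
\dot\Gamma=\frac{\dot S}{S}-\mathcal{R}_0\bigl(\dot S+\dot I+\dot C\bigr)=-\beta T+\mathcal{R}_0\,\gamma\, T=0 .
\]
Since the first three equations of (\ref{eq5}) do not involve $H$, this settles the first claim; it is a one-line computation.

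\emph{Second}, for the localization of $S_\infty$ I would combine this conserved quantity with the monotonicity of $S$. By the previous proposition, when $\mathcal{R}_0>1$ the limits $S_\infty\in[0,S_0]$ and $I_\infty=C_\infty=0$ exist. Moreover $T(t)=T_0\exp\!\bigl(\int_0^t(\beta S(s)-\gamma)\,ds\bigr)$, so $T_0=I_0+C_0>0$ forces $T(t)>0$ for all $t\ge0$; hence $\dot S=-\beta S T<0$ throughout, $S$ is strictly decreasing, and $0\le S_\infty<S_0$ (with $S_0>0$ for admissible data). Evaluating $\Gamma$ at $t=0$ and letting $t\to\infty$ gives
\[
\log S_0-\mathcal{R}_0(S_0+T_0)=\log S_\infty-\mathcal{R}_0 S_\infty ,
\]
whose left-hand side is finite, so $S_\infty>0$. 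Now introduce $\phi(S):=\log S-\mathcal{R}_0 S$, with $\phi'(S)=1/S-\mathcal{R}_0$ positive on $(0,1/\mathcal{R}_0)$ and negative on $(1/\mathcal{R}_0,\infty)$, so that $\phi$ has its unique maximum at $S=1/\mathcal{R}_0$. The identity above reads $\phi(S_\infty)=\phi(S_0)-\mathcal{R}_0 T_0<\phi(S_0)$. If $S_\infty\ge1/\mathcal{R}_0$ held, then since $S_\infty<S_0$ the whole segment $[S_\infty,S_0]$ would lie where $\phi$ is strictly decreasing, forcing $\phi(S_\infty)>\phi(S_0)$, a contradiction; hence $S_\infty<1/\mathcal{R}_0$, and with $S_\infty>0$ we conclude $S_\infty\in(0,1/\mathcal{R}_0)$.

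There is no serious obstacle here: the derivation of $\Gamma$ is effortless, and the only points needing a little care are the elementary preliminaries (existence of $S_\infty$, strict monotonicity of $S$, positivity of $S_\infty$). The conceptual content is simply the observation that the level set of $\Gamma$ through the initial point can intersect the critical manifold $\{I=C=H=0\}$ only on its attracting branch $\{S<1/\mathcal{R}_0\}$.
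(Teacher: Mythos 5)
Your proposal is correct and follows essentially the same route as the paper: the same one-line computation showing $\dot\Gamma=0$, followed by evaluating the conserved quantity at $t=0$ and $t\to\infty$ and exploiting the unimodality of $x\mapsto\log x-\mathcal{R}_0 x$ (increasing on $(0,1/\mathcal{R}_0)$, decreasing beyond). The only cosmetic difference is that the paper locates $S_\infty$ as the unique zero of the auxiliary function $h(x)=\log(x/S_0)-\mathcal{R}_0(x-S_0-I_0-C_0)$ on $(0,S_0)$, whereas you argue by contradiction via $\phi$, adding explicit (and harmless) preliminaries on the positivity of $T$ and of $S_\infty$.
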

\begin{proof}
    By direct derivation with respect to time, we see that 
    \begin{equation*}
        \dot{\Gamma}(S,I,C)=-\beta(I+C) + \frac{\beta}{\gamma} (\gamma I +\gamma C) = 0. 
    \end{equation*}
    Therefore 
    \begin{equation*}
        \log{\frac{S_\infty}{S_0}} = \mathcal{R}_0 (S_\infty -S_0-I_0-C_0).  
    \end{equation*}
    Define over $(0, S_0)$ the function $h(x)=\log{\frac{x}{S_0}} - \mathcal{R}_0 (x -S_0-I_0-C_0)$, notice that $\lim_{x \to 0^+} h(x) = -\infty$ while $h(S_0)>0$. Since $\frac{dh}{dx}(x)=1/x - \mathcal{R}_0$, $h$ increases in $(0, \min \{1/\mathcal{R}_0, S_0\})$ and decreases in $(\min \{1/\mathcal{R}_0, S_0\}, S_0)$, hence there exists a unique zero of $h$, which represents $S_\infty$, and it belongs to $(0, 1/\mathcal{R}_0)$. 
\end{proof}

Until now, we have studied the flow of (\ref{eq5}), the fast subsystem of (\ref{eq4}). Before trying to understand the relationship between the orbits of the two systems, we will focus on the slow flow occurring near the critical manifold $\mathcal{C}_0$.

\subsection{Slow formulation} \label{sotto3}

\noindent Consider (\ref{eq2}) and assume that a solution reached an $\mathcal{O}(\varepsilon^2)$-neighborhood of the critical manifold $\mathcal{C}_0$, namely $I, C,H \in \mathcal{O}(\varepsilon^2)$. We rescale the compartments with sick individuals as $I=\varepsilon u$, $C=\varepsilon v$, and $H=\varepsilon w$. Moreover, apply a rescaling to the time variable, bringing the system to the slow timescale $\tau=\varepsilon t$: 
\begin{align} \label{eq9}
\left\{
\begin{aligned}
    S' &= -\beta S (u+v) + (1-S-\varepsilon u- \varepsilon v- \varepsilon w), \\ 
    \varepsilon u' &= (1-\theta)\beta S (u+v) - \gamma_I u, \\ 
    \varepsilon v' &= \theta \beta S(u+v) - \gamma_C v, \\ 
    \varepsilon w' &= \gamma_C v - \gamma_H w, 
\end{aligned}
\right.
\end{align}
where the ' indicates the derivative with respect to the slow time $\tau$. Notice that system (\ref{eq9}) has the same structure of system (\ref{fss}) and that $I,C,H \in \mathcal{O}(\varepsilon^2)$ implies $u,v,w \in \mathcal{O}(\varepsilon)$. However, since the critical manifold $\mathcal{C}_0$ is not normally hyperbolic (see Lemma \ref{lemma2}), Fenichel's Theorem \ref{Fen} cannot describe the whole behavior of the orbits during the slow flow. If we look at system (\ref{eq9}) on the critical manifold $\mathcal{C}_0$, now determined by $u=v=w=0$, we obtain 
\begin{equation} \label{eq11}
    S' = 1-S. 
\end{equation}
Therefore point (3) of Fenichel's Theorem \ref{Fen} implies that $S$ grows exponentially towards $1$ on the slow timescale and as long as we are looking the orbits away from $S=S_2=1/\mathcal{R}_0$. Indeed notice that if $\varepsilon \to 0$, then $u,v,w \to 0$ and the evolution of $S$ (\ref{eq9}) converges to the evolution described by Eq. (\ref{eq11}).

\begin{remark} \label{rmk2}
    If the initial conditions lie on $\mathcal{C}_0$ then the orbit converges towards the DFE $\mathbf{x_1}$, independent of $\mathcal{R}_0$. This is in line with the proof of Theorem \ref{teo1} where we showed that, despite the instability of the DFE for $\mathcal{R}_0>1$, the Jacobian matrix has three negative eigenvalues, ensuring local attraction along the corresponding eigenspaces. Using the notation of the proof of Theorem \ref{teo1}: $E(\lambda_1)=\textnormal{span}_\mathbb{R}\{(1,0,0,0)\}$ contains $\mathcal{C}_0$, which is itself a trajectory; $E(\lambda_2)=\textnormal{span}_\mathbb{R}\{(1,0,0,\gamma_H/\varepsilon-1)\}$ corresponds to the biologically irrelevant case $I_0=C_0=0$; $E(\lambda_4)$ is associated with non-admissible initial conditions, as it is spanned by a vector with components of different signs. 
\end{remark}

\subsection{Unified formulation and delayed loss of stability}

\noindent Standard perturbation theory \cite[Corollary 3.1.7]{2} implies that an orbit of the perturbed system (\ref{eq4}), away from the critical manifold $\mathcal{C}_0$, follows $\mathcal{O}(\varepsilon)$-closely the orbit of the fast subsystem (\ref{eq5}), related to the same initial conditions, for $\mathcal{O}(1)$ times $t$. Therefore, intuitively, we expect two possible scenarios. 

The first possibility is that as the orbit of the fast subsystem approaches $S_\infty$, the orbit of the perturbed system stays close to it and enters in the slow flow. In this case, when $I, C,H \in \mathcal{O}(\varepsilon^2)$ the influence of $\mathbf{F_2}$ (\ref{eq6}) becomes very relevant and, in the perturbed flow (\ref{eq4}), $S$ increases gently. In particular, the orbit is attracted to $\mathcal{C}_0$ as long as $S<1/\mathcal{R}_0$ since the manifold is attracting, hence the orbit stays close to $\mathcal{C}_0$ during this time window. On the other hand, when $S>1/\mathcal{R}_0$ the manifold is of saddle type, hence, sooner or later, it will be repelled away (indeed, as already explained, a biologically relevant solution is affected by the unstable component of $\mathcal{C}_0$ for $S>1/\mathcal{R}_0$). This behavior is represented in Figure \ref{f3}(A). Note that not necessarily any $X\in\{I,C,H\}$ decreases for $S<S_2$ and increases for $S>S_2$ (for example, $\dot{H}<0$ if and only if $\gamma_H H > \gamma_C C$). Biologically, this behavior represents the slow increment in the number of susceptible individuals between two different waves of an epidemic. Later a second epidemic wave will begin. 

The second possibility is that the perturbed orbit moves away from the one of the fast subsystem before entering in a $\mathcal{O}(\varepsilon^2)$-neighborhood of $\mathcal{C}_0$ (since any $X\in\{I,C,H\}$ decreases at most exponentially fast, a time $t$ of order at least $|\log \varepsilon| \gg 1$ is required to enter in such neighborhood). In this case we expect the orbit to quickly converge towards the EE (see Figure \ref{f3}(B)). Biologically, this evolution represents an epidemic without a full second wave but that does not come to an end. 

Numerical simulations show that both situations are possible (see Figures \ref{f4} and \ref{f5}). Note that only in Figure \ref{f4} the orbit enters in the slow flow. In particular, those numerical experiments showed that if the initial conditions satisfy $I_0+C_0>0$, then the orbits converge to the EE\footnote[1]{We do not report the results of all the numerical simulations carried out, but the orbits always showed a behavior similar to that of Figures \ref{f4} or \ref{f5}.}, which is $\mathcal{O}(\varepsilon)$-close to the critical manifold. Suppose that an orbit of (\ref{eq4}) is in a $\mathcal{O}(\varepsilon)$-neighborhood of $\mathcal{C}_0$ for $S<S_2$. We would like to understand when it enters in a $\mathcal{O}(\varepsilon^2)$-neighborhood of $\mathcal{C}_0$ starting the slow flow and its behavior during this process. Notice that, since the orbits converge towards the EE $\mathbf{x_2}$, for long times they cannot enter in a $\mathcal{O}(\varepsilon^2)$-neighborhood of $\mathcal{C}_0$ because $\mathbf{x_2}$ is $\mathcal{O}(\varepsilon)$-close to $\mathcal{C}_0$. The fact that the orbits start the slow flow in a $\mathcal{O}(\varepsilon^2)$-neighborhood of $\mathcal{C}_0$ should ensure that they are sufficiently far from the locally asymptotically stable EE so that they are not immediately attracted to it.

\begin{figure}
    \centering
    \begin{subfigure}{0.485\textwidth}
        \centering
        \includegraphics[width=\textwidth]{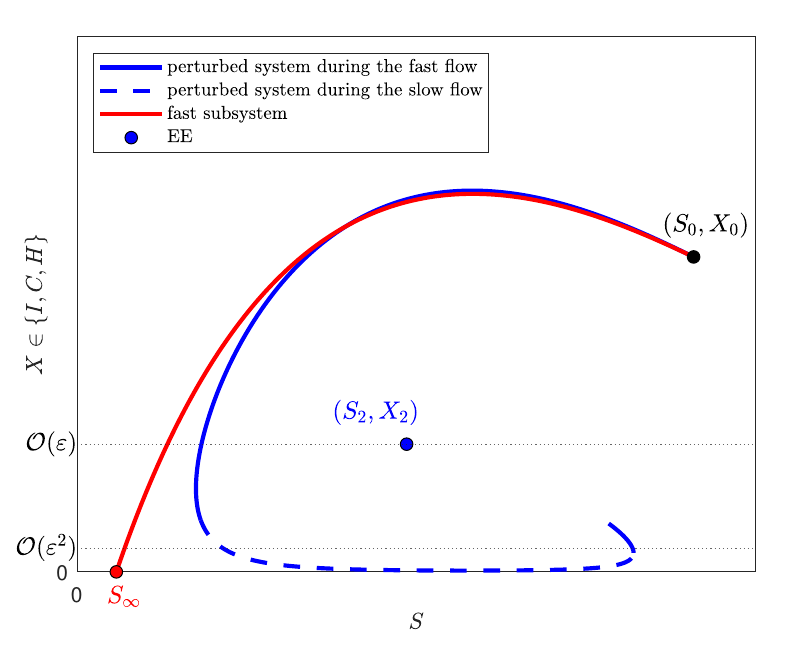}
        \caption{}
    \end{subfigure}
    \hspace{5mm}
    \begin{subfigure}{0.468\textwidth}
        \centering
        \includegraphics[width=\textwidth]{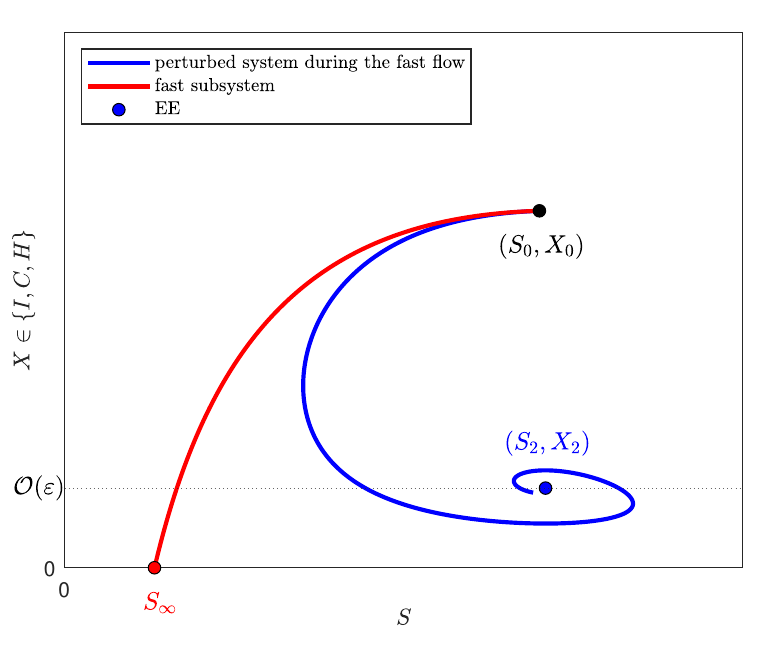}
        \caption{}
    \end{subfigure}
    \caption{Schematic representations of the orbits of the perturbed system (\ref{eq4}) and of its fast subsystem (\ref{eq5}), showing two possible behaviors. (A) The perturbed orbit stays $\mathcal{O}(\varepsilon)$-close to the orbit of (\ref{eq5}) until entering in a $\mathcal{O}(\varepsilon^2)$-neighborhood of $\mathcal{C}_0$. In this moment, the orbit enters in the slow flow, preparing a full second wave of the epidemic. (B) The perturbed orbit early moves away from the orbit of (\ref{eq5}) and it does not enter in a $\mathcal{O}(\varepsilon^2)$-neighborhood of $\mathcal{C}_0$. Instead, it quickly converges to the EE, hence there is not a full second wave of the epidemic.} \label{f3}
\end{figure}

\begin{figure}
    \centering
    \begin{subfigure}{0.47\textwidth}
        \centering
        \includegraphics[width=\textwidth]{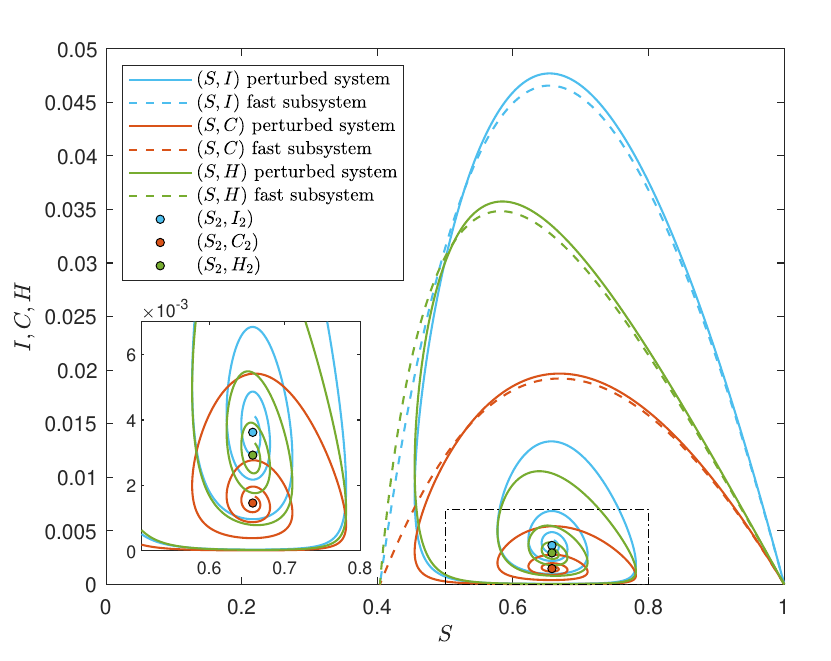}
        \caption{}
    \end{subfigure}
    \hspace{5mm}
    \begin{subfigure}{0.48\textwidth}
        \centering
        \includegraphics[width=\textwidth]{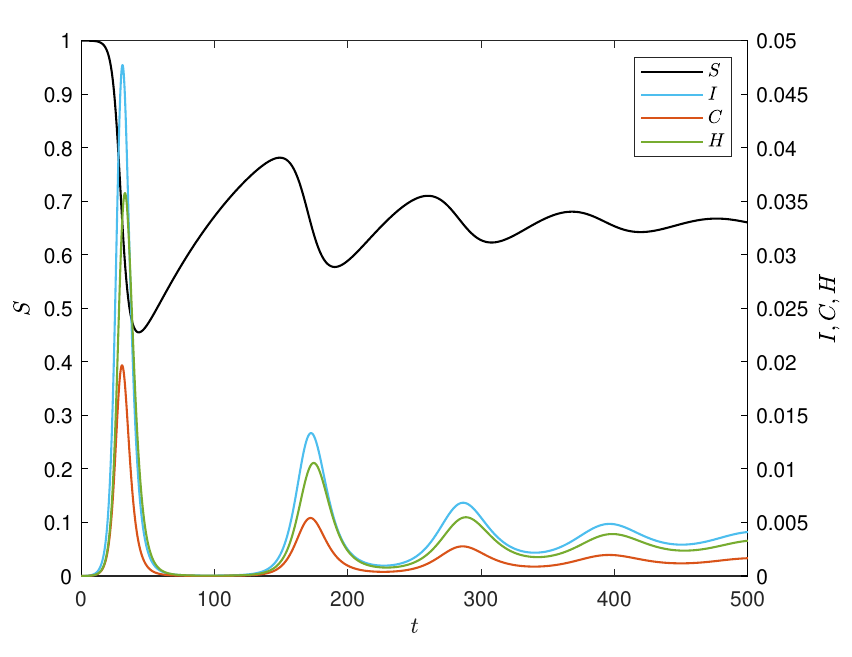}
        \caption{}
    \end{subfigure}
    \caption{Simulations of system (\ref{eq2}). The initial conditions have been chosen according to Subsection \ref{subsec1}, $I_0+C_0=10^{-5}$. After the first wave of the epidemic, the orbit enters in a $\mathcal{O}(\varepsilon^2)$-neighborhood of $\mathcal{C}_0$ starting the slow flow, note from the subgraph of (A) that during this process $I,C,H \in \mathcal{O}(\varepsilon^2)$. Later a second wave starts, but the orbit does not enter in a $\mathcal{O}(\varepsilon^2)$-neighborhood of $\mathcal{C}_0$ again. Indeed, in the subgraph of (A), it is clearly visible that $I$, $C$, and $H$ pass very close to $C_2 \not\in\mathcal{O}(\varepsilon^2)$. In (B) the two full waves of the epidemic are clearly visible. Notice that between them the values attained by $I$, $C$, and $H$ are much smaller than those attained after the second wave. For larger times, small perturbations around the EE create partial additional waves of the epidemic. We set $\beta=1$, $\theta=0.35$, $\gamma_I=0.6$, $\gamma_C=0.8$, $\gamma_H=0.4$, and $\varepsilon=0.01$.} \label{f4}
\end{figure}

\begin{figure}
    \centering
    \begin{subfigure}{0.47\textwidth}
        \centering
        \includegraphics[width=\textwidth]{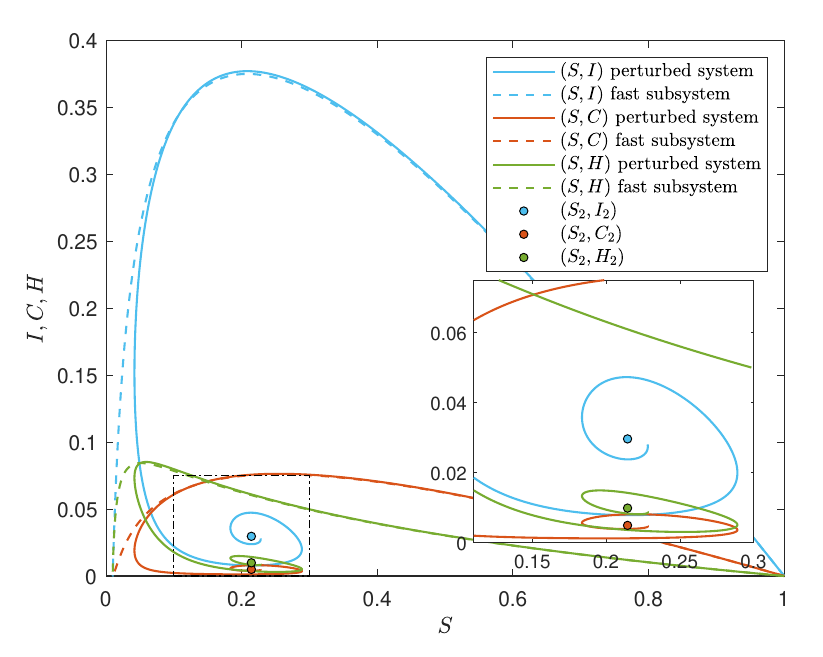}
        \caption{}
    \end{subfigure}
    \hspace{5mm}
    \begin{subfigure}{0.48\textwidth}
        \centering
        \includegraphics[width=\textwidth]{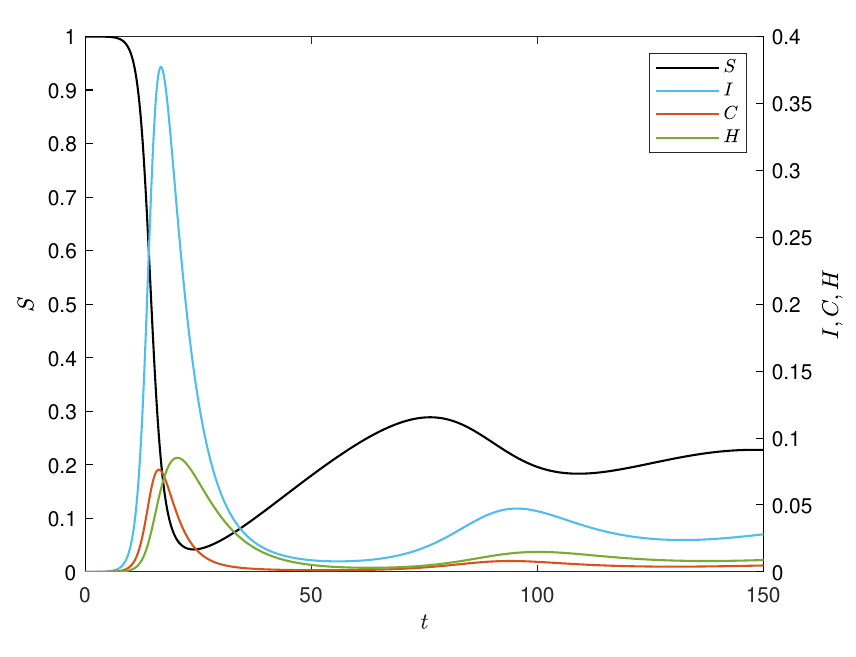}
        \caption{}
    \end{subfigure}
    \caption{Simulations of system (\ref{eq2}). The initial conditions have been chosen according to Subsection \ref{subsec1}, $I_0+C_0=10^{-5}$. After the first wave of the epidemic, the orbit does not enter in a $\mathcal{O}(\varepsilon^2)$-neighborhood of $\mathcal{C}_0$, indeed note from the subgraph of (A) that the minimum value attained by $I$ is larger than $C_2 \not\in \mathcal{O}(\varepsilon^2)$. Instead of starting the slow flow, the orbit quickly converges to the EE. In (B) it is visible that there is not a full second wave of the epidemic, there are only small perturbations caused by the convergence towards the EE. We set $\beta=1$, $\theta=0.2$, $\gamma_I=0.2$, $\gamma_C=0.3$, $\gamma_H=0.15$, and $\varepsilon=0.01$.} \label{f5}
\end{figure} 

Assume that an orbit of (\ref{eq4}) is in a $\mathcal{O}(\varepsilon)$-neighborhood of $\mathcal{C}_0$ and rescale the compartments with sick individuals as $I=\varepsilon u$, $C=\varepsilon v$, and $H=\varepsilon w$, bringing system (\ref{eq2}) into the standard structure (see Subsection \ref{GSPT}) 
\begin{align} \label{eq10}
\left\{
\begin{aligned}
    \dot{S}&=\varepsilon \; g(S,\mathbf{z}), \\ 
    \dot{\mathbf{z}}&=\mathbf{f}(S,\mathbf{z}), 
\end{aligned}
\right.
\end{align}
where
\begin{equation*} 
    \mathbf{z}=
    \begin{bmatrix}
        u \\ v \\ w
    \end{bmatrix}, \; 
    g(\mathbf{z}, S)=-\beta S (u+v) + (1-S-\varepsilon(u+v+w)), \;
    \mathbf{f}(S,\mathbf{z}) = 
    \begin{bmatrix}
        (1-\theta) \beta S (u+v) - \gamma_I u \\ 
        \theta \beta S(u+v) - \gamma_C v \\ 
        \gamma_C v - \gamma_H w 
    \end{bmatrix}. 
\end{equation*}
Notice that system (\ref{eq10}) can be equivalently written as 
\begin{align} \label{eq12}
\left\{
\begin{aligned}
    S' &= g(S,\mathbf{z}), \\ 
    \varepsilon \; \mathbf{z}' &=\mathbf{f}(S,\mathbf{z}), 
\end{aligned}
\right.
\end{align}
which depends on $\tau=\varepsilon t$, the previously introduced slow time. Clearly the critical manifold $\mathcal{C}_0$ is described by $\mathbf{z}=\mathbf{0}$. In order to understand whether the slow flow begins and to analyze the behavior of the orbits during this phase, we need Tikhonov's Theorem \cite[Theorem 4.1.2]{2}, which we report below in a version specific for our system. On the other hand, the entry-exit function will help us describe not only how the orbits behave as they pass near the non-hyperbolic point of $\mathcal{C}_0$, but also how they exit the slow flow to return in the fast flow. 

\begin{theorem} [Tikhonov] \label{teo3}
    Suppose that $\mathcal{R}_0>1$. Fix $S_0 \in (0, 1/\mathcal{R}_0)$ and one of its neighborhood $U_0 \subset (0,1/\mathcal{R}_0)$ such that $\lambda_3 \le -\lambda$ (\ref{eq7}) for all $S \in U_0$ and for some $\lambda>0$. Then there exists $\varepsilon_M, c_0, \dots, c_5>0$ such that the following properties hold for $0<\varepsilon < \varepsilon_M$: 
    \begin{enumerate}
        \item any solution of (\ref{eq12}) with initial conditions $(S_0, \mathbf{z_0})$ such that $||\mathbf{z_0}||<c_0$ satisfies 
        \begin{equation} \label{eq122}
            ||\mathbf{z}(\tau)|| \le c_1 \varepsilon + c_2 ||\mathbf{z_0}|| \exp\left(-c_3 \frac{\tau}{\varepsilon}\right) 
        \end{equation}
        for all $\tau \ge 0$ such that $S(s) \in U_0$ for $0 \le s \le \tau$; 
        \item let $S_{slow}(\tau)$ be the solution of (\ref{eq11}) with initial condition $S_{slow}(0)=S_0$, then 
        \begin{equation} \label{eq13}
            ||S(\tau)-S_{slow}(\tau)|| \le c_4 \varepsilon \exp(\tau) + c_5 ||\mathbf{z_0}|| \exp\left(-c_3 \frac{\tau}{\varepsilon}\right)
        \end{equation}
        for all $\tau \ge 0$ such that $S(s)\in U_0$ for $0 \le s \le \tau$. 
    \end{enumerate}
\end{theorem}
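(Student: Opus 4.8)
The proof of this Tikhonov-type result is essentially a translation of the standard statement of Tikhonov's Theorem (as in \cite[Theorem 4.1.2]{2}) into the coordinates of system (\ref{eq12}), so the plan is to verify that all hypotheses of the abstract theorem are met and then read off the conclusions. First I would recall that system (\ref{eq12}) is in the standard slow-fast form with slow variable $S$ and fast variable $\mathbf{z}=(u,v,w)$, and that the critical manifold is $\{\mathbf{z}=\mathbf{0}\}$, on which the reduced (slow) equation is precisely $S'=g(S,\mathbf{0})=1-S$, i.e.\ Eq.~(\ref{eq11}). The key structural fact to check is \emph{attractivity} of the critical manifold at and near $S_0$: the fast-subsystem Jacobian $\mathrm{D}_\mathbf{z}\mathbf{f}(S,\mathbf{0})$ is exactly the matrix appearing in Lemma \ref{lemma2}, whose eigenvalues are $\lambda_1=-\gamma_H$, $\lambda_2$, $\lambda_3$ given by (\ref{eq7}). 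By Lemma \ref{lemma2}, $\lambda_1,\lambda_2<0$ for all $S$, and $\lambda_3<0$ whenever $S<1/\mathcal{R}_0$; since $\lambda_3$ is continuous and strictly increasing in $S$ and $S_0<1/\mathcal{R}_0$, we may fix a neighborhood $U_0\subset(0,1/\mathcal{R}_0)$ of $S_0$ and a constant $\lambda>0$ with $\lambda_3(S)\le-\lambda$ on $U_0$; this is exactly the hypothesis stated in the theorem, and it guarantees that $\mathrm{D}_\mathbf{z}\mathbf{f}(S,\mathbf{0})$ is uniformly Hurwitz for $S\in U_0$.

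Next I would invoke the abstract Tikhonov theorem. Its hypotheses are: (i) $\mathbf{f},g$ are smooth (here they are polynomial, hence $C^\infty$); (ii) the equation $\mathbf{f}(S,\mathbf{z})=\mathbf{0}$ has an isolated root $\mathbf{z}=\varphi(S)$ near $\mathbf{z}=\mathbf{0}$ for $S\in U_0$ (here $\varphi\equiv\mathbf{0}$ works, since $\mathbf{f}(S,\mathbf{0})=\mathbf{0}$ and by the uniform invertibility of $\mathrm{D}_\mathbf{z}\mathbf{f}(S,\mathbf{0})$ the root is isolated); (iii) this root is uniformly asymptotically stable as an equilibrium of the fast (boundary-layer) system $\mathrm{d}\mathbf{z}/\mathrm{d}t=\mathbf{f}(S,\mathbf{z})$, uniformly in the parameter $S\in U_0$; and (iv) the initial datum $\mathbf{z_0}$ lies in the basin of attraction of this equilibrium uniformly in $S\in U_0$, which is ensured by taking $\|\mathbf{z_0}\|<c_0$ with $c_0$ small, using again the uniform Hurwitz bound. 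The conclusion of the abstract theorem then yields precisely the two estimates: the fast variable is trapped in an $\mathcal{O}(\varepsilon)$-layer after an initial exponential transient of rate $\sim c_3/\varepsilon$, giving (\ref{eq122}); and the slow variable tracks the reduced solution $S_{slow}$ of (\ref{eq11}) with an error controlled by the same transient plus an $\mathcal{O}(\varepsilon)$ term that may grow like $\exp(\tau)$ because the reduced flow $S'=1-S$ has Lipschitz constant $1$ (Gr\"onwall), giving (\ref{eq13}). Both estimates are valid only as long as the slow trajectory stays in $U_0$, i.e.\ for those $\tau$ with $S(s)\in U_0$ for $0\le s\le\tau$, which is exactly the range stated.

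The only genuinely substantive point, and hence the main obstacle, is the uniform stability hypothesis (iii): one must confirm that the attractivity established for a \emph{fixed} $S$ in Lemma \ref{lemma2} is in fact \emph{uniform} over $S\in U_0$. This follows because $U_0$ can be chosen with compact closure contained in $(0,1/\mathcal{R}_0)$, on which $\lambda_3(S)$ attains a negative maximum $-\lambda$; together with $\lambda_1=-\gamma_H$ and the bound $\lambda_2(S)<0$ (also uniform on the compact set, since $\lambda_2$ is continuous), all eigenvalues of $\mathrm{D}_\mathbf{z}\mathbf{f}(S,\mathbf{0})$ have real part $\le-\min\{\lambda,\gamma_H,\,-\max_{U_0}\lambda_2\}<0$ uniformly. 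Since the matrix is also the full linearization (the fast subsystem $\dot{\mathbf{z}}=\mathbf{f}(S,\mathbf{z})$ is linear in $\mathbf{z}$ for frozen $S$), exponential stability is in fact global in $\mathbf{z}$ and the constants $c_0,\dots,c_5$ depend only on the data $\beta,\theta,\gamma_I,\gamma_C,\gamma_H,\mathcal{R}_0$ and on $U_0$, not on $\varepsilon$. With this uniformity in hand, the cited form of Tikhonov's Theorem applies verbatim and the proof is complete.
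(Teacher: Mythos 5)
Your proposal is correct and matches the paper's treatment: the paper does not give an independent proof but states the result as the cited Tikhonov theorem \cite[Theorem 4.1.2]{2} specialized to system (\ref{eq12}), which is exactly what you do by checking its hypotheses via Lemma \ref{lemma2} (uniform Hurwitz property of $\mathrm{D}_{\mathbf{z}}\mathbf{f}(S,\mathbf{0})$ on a compact $U_0\subset(0,1/\mathcal{R}_0)$, linearity of the fast field in $\mathbf{z}$, and the reduced flow $S'=1-S$) and then reading off estimates (\ref{eq122})--(\ref{eq13}).
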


Relation (\ref{eq122}) implies that $\mathbf{z}$ reaches an $\mathcal{O}(\varepsilon)$-neighborhood of $\mathcal{C}_0$ in times $\tau$ of order $\varepsilon |\log{\varepsilon}|$, i.e., in times $t$ of order $|\log{\varepsilon}|$ (notice that $\mathbf{z} \in \mathcal{O}(\varepsilon)$ implies $I,C,H \in \mathcal{O}(\varepsilon^2)$). Moreover, the solution stays there as long as this manifold is attracting (i.e., as long as $S$ is lower than $S_2=1/\mathcal{R}_0$). Relation (\ref{eq13}) implies that $S(\tau) = S_{slow}(\tau) + \mathcal{O}(\varepsilon)$ for times $\tau$ of order between $\varepsilon |\log \varepsilon|$ and $1$, i.e., for times $t$ between $|\log \varepsilon|$ and $1/\varepsilon$, hence they both grow towards $1$. When $S>1/\mathcal{R}_0$ the critical manifold $\mathcal{C}_0$ becomes of saddle type, therefore, sooner or later, the orbit escapes from the slow flow and the system re-enters in the fast timescale, with small values of $I$, $C$, and $H$, like when an epidemic begins (indeed, now there will be a second wave). Now, the orbit can again show two different behaviors: it could directly converge to the EE, or it could enter in the slow flow for a second time. However, at some point, the orbit is attracted to the locally asymptotically stable EE $\mathbf{x_2}$, therefore, since $I_2,C_2,H_2 \not\in \mathcal{O}(\varepsilon^2)$, the previous process stops (for example, in Figure \ref{f4} the entry-exit phenomenon happens only once). Indeed, the hypothesis on the eigenvalue $\lambda_3$ of Tikhonov's Theorem \ref{teo3} is violated since $S$ is converging towards $S_2=1/\mathcal{R}_0$ and $\lambda_3=0$ if $S=1/\mathcal{R}_0$ (see Lemma \ref{lemma2}). Moreover, such theorem tells us that, in order to be attracted to $\mathcal{C}_0$, at some time the orbit must satisfy $|| \mathbf{z} || < c_0 $ when $S < 1/\mathcal{R}_0$ (however the value $c_0$ is not explicit). 

\begin{remark}
    The convergence to the EE of the orbit of Figure \ref{f4} is much slower than the convergence to the EE of the orbit of Figure \ref{f5}. This is due to the fact that only in the first case the orbit enters in a $\mathcal{O}(\varepsilon^2)$-neighborhood of $\mathcal{C}_0$, starting the slow dynamics of system (\ref{eq2}). Indeed, in the first case, after $t \simeq 40$ time the end of the first wave, $S$ reaches the value $1/\mathcal{R}_0$ (notice that $40$ is larger than $|\log \varepsilon| \simeq 4.61$ but smaller than $1/\varepsilon = 100)$. 
\end{remark}

Tikhonov's Theorem \ref{teo3} gave us information regarding the behavior of the orbits for $S<1/\mathcal{R}_0$ but it did not say anything about the orbits when $S>1/\mathcal{R}_0$. Since the critical manifold $\mathcal{C}_0$ is of saddle type for $S>1/\mathcal{R}_0$, we know that the orbits will be repelled away from $\mathcal{C}_0$, but we do not have precise information about their exact behavior or the moment at which this repulsion occurs. Indeed, the orbits may either exit the slow flow immediately or remain close to $\mathcal{C}_0$ until the accumulated contraction is balanced by the accumulated expansion, a phenomenon commonly referred to as delayed loss of stability \cite{15} (for example, this is precisely what occurs for the orbit of system (\ref{planar}) shown in Figure \ref{f-1}). During the slow flow, we are mainly interested in the compartment $S$ since its growth represents the slow increment in the number of susceptible individuals between two different waves of the  epidemic. In particular, we would like to obtain more information regarding the value attained by $S$ when the orbits leave the $\mathcal{O}(\varepsilon^2)$-neighborhood of $\mathcal{C}_0$ and the time needed to leave such neighborhood. To this end, we are going to employ the entry–exit function. 

We restrict this analysis to the case $\gamma_I=\gamma_C=\gamma$. Moreover, suppose that the initial conditions have been chosen according to Subsection \ref{subsec1}, which implies that system (\ref{eq2}) reduces to system (\ref{eqsimple}). Assume that the orbit entered in a $\mathcal{O}(\varepsilon^2)$-neighborhood of $\mathcal{C}_0$ and define $x=T/\varepsilon=u+v$ and $y=\mathcal{R}_0 S-1$, system (\ref{eq10}) simplifies to 
\begin{align} \label{system_xy}
\left\{
\begin{aligned}
    \dot{y} &= \varepsilon \left(\mathcal{R}_0-(y+1)\left(1+\beta x\right)\right) - \varepsilon^2 \mathcal{R}_0 (x+w) \eqqcolon \varepsilon \; g(x,y,\varepsilon) - \varepsilon^2 \; l(x,w,\varepsilon),\\
    \dot{x} &= \gamma x y \eqqcolon x \; f(x,y,\varepsilon),\\
    \dot{w} &= \gamma \theta x - \gamma_H w. 
\end{aligned}
\right.
\end{align}
If we neglect the $\mathcal{O}(\varepsilon^3)$-term $\varepsilon^2 l(x,w,\varepsilon)$, the first two equations of (\ref{system_xy}) have the exact same structure of the ones of (\ref{planar}). In particular, since $g(0,y,0)>0$ and $\textnormal{sign} (f(0,y,0))=\textnormal{sign}(y)$, the entry-exit function can be used to compute the exit point and the exit time\footnote[2]{Several articles have been devoted to the application of the entry-exit function to more complex systems \cite{37,38}. In \cite{39} a multi-dimensional version of the entry–exit function is provided, while in \cite{36} a result is achieved supposing that the stability of the critical manifold is determined by two intersecting eigenvalues.}. Consider the entry point $(S_0, T_0, H_0)$ such that $S_0<1/\mathcal{R}_0$ and $(T_0, H_0)$ is $\mathcal{O}(\varepsilon^2)$-close to $\mathcal{C}_0$ (hence $y_0=\mathcal{R}_0 S_0-1<0$ and $x_0 = T_0/\varepsilon \in \mathcal{O}(\varepsilon)$). The exit phenomenon happens when $T=T_0$ for $S=S_E>1/\mathcal{R}_0$. Eq. (\ref{e-e-f}) implies that an $\mathcal{O}(\varepsilon)$-approximation of $S_E$ is given by $S_E=(y_E+1)/\mathcal{R}_0$, where $y_E=P_0(y_0)$ satisfies
\begin{equation} \label{eq990}
    0 = \int_{y_0}^{y_E} \frac{f(0,y,0)}{g(0,y,0)} dy = \int_{y_0}^{y_E} \frac{\gamma \; y}{\mathcal{R}_0-y-1} dy, 
\end{equation}
namely 
\begin{equation} \label{eq999}
    y_E - y_0 +(\mathcal{R}_0 -1) \left( \log|\mathcal{R}_0-y_E-1|-\log|\mathcal{R}_0-y_0-1| \right)=0. 
\end{equation}
It is important to note that the term $\varepsilon^2 l(x,w,\varepsilon)$ can be neglected during this whole process since $\dot{w}>0$ if and only if $w < \gamma \theta x / \gamma_H$, hence $w \in \mathcal{O}(\varepsilon)$ as long as $x \in \mathcal{O}(\varepsilon)$. Eq. (\ref{eqtime}) ensures that an $\mathcal{O}(\varepsilon)$-approximation of the (slow) exit time $\tau_E$ satisfies 
\begin{equation} \label{eq998}
    0 = \int_0^{\tau_E} f(0,y(\tau),0) \; d\tau = \int_0^{\tau_E} \gamma \; y(\tau) \; d\tau = \int_0^{\tau_E} \left( \beta \; S(\tau)-\gamma \right) \; d\tau, 
\end{equation}
which argument of the last integral is indeed equal to the eigenvalue, associated to the fast variable $T$, describing the stability of the critical manifold $\mathcal{C}_0$ (see Eq. (\ref{eqtime_eig})). Since, on $\mathcal{C}_0$, $S$ grows towards $1$ (see Eq. (\ref{eq11})), (\ref{eq998}) admits a unique non-trivial solution, hence the same is true also for (\ref{eq990}). Finally, again Eq. (\ref{eq11}) implies that $\tau_E$ is implicitly given by 
\begin{equation} \label{eq1000}
    (\beta-\gamma)\tau_E - \beta (1-S_0)(1-\exp(-\tau_E))=0.
\end{equation}
In conclusion, such orbit of system (\ref{system_xy}) exhibits delayed loss of stability. 

Figure \ref{f7}(A) shows a comparison between the exit point computed numerically and with Eq. (\ref{eq999}), the maximum absolute error is approximately equal to $0.0011$, lower than $\varepsilon=0.01$. Figure \ref{f7}(B) shows a comparison between the slow exit time computed numerically and with Eq. (\ref{eq1000}), the maximum absolute error is approximately equal to $0.0033$, again lower than $\varepsilon=0.01$. Obviously, the fast exit time $t_E$ is given by $t_E=\tau_E/\varepsilon$, notice that it can be approximate up to an $\mathcal{O}(1)$ error (indeed, the maximum absolute value of this error related to the test presented in Figure \ref{f7}(B) is approximately $0.33$). 

\begin{figure}
    \centering
    \begin{subfigure}{0.485\textwidth}
        \centering
        \includegraphics[width=\textwidth]{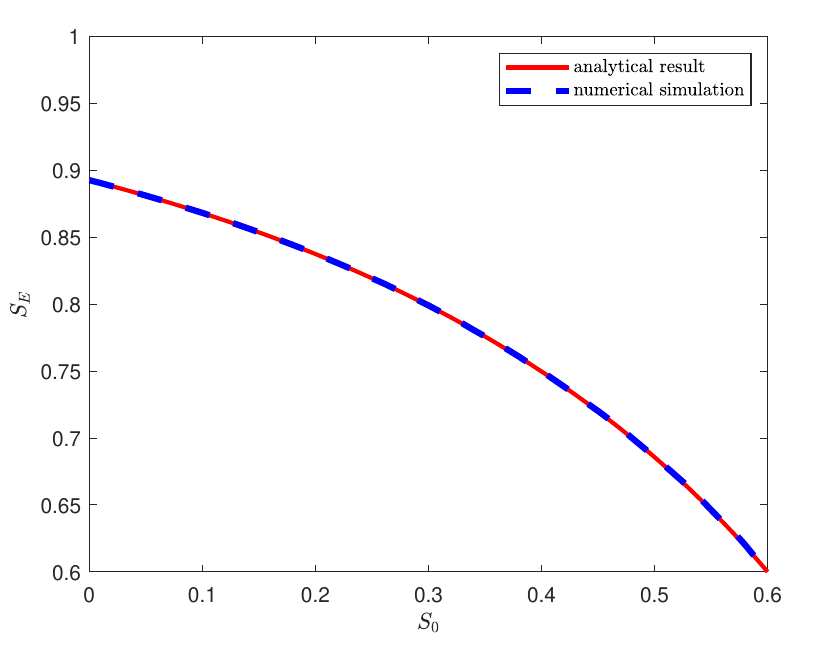}
        \caption{}
    \end{subfigure}
    \hspace{5mm}
    \begin{subfigure}{0.47\textwidth}
        \centering
        \includegraphics[width=\textwidth]{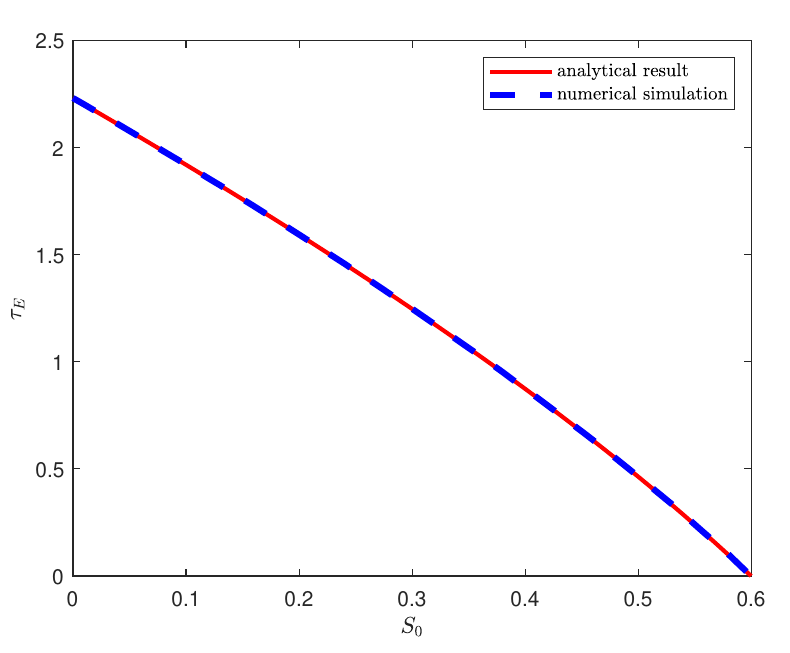}
        \caption{}
    \end{subfigure}
    \caption{Comparison between analytical predictions and numerical simulations illustrating the phenomenon of delayed loss of stability. (A) Exit points $S_E$ for entry points in the interval $S_0 \in (0, 1/\mathcal{R}_0)$, as predicted by Eq. (\ref{eq999}) and as obtained by direct integration of system (\ref{system_xy}) (recall that $S_E=(y_E+1)/\mathcal{R}_0$). (B) Slow exit times $\tau_E$ for entry points in the interval $S_0 \in (0, 1/\mathcal{R}_0)$, as predicted by Eq. (\ref{eq1000}) and as obtained by direct integration of system (\ref{system_xy}). The initial values for the fast variables were $(T_0, H_0)=(10^{-5}, 10^{-5})$.  We set $\beta=1$, $\theta=0.35$, $\gamma=0.6$, $\gamma_H=0.2$, and $\varepsilon=0.01$, hence $1/\mathcal{R}_0=0.6$.} \label{f7}
\end{figure} 

\section{Economic impact by disease severity} \label{section4}

\noindent In this section we are interested in understanding the influence of the probability that an infected individual will undergo a critical course of the disease $\theta$ on the economic impact of the epidemic. In particular, we would like to have a low number of hospitalized individuals since hospital care could have a big impact on the economy of a nation. 

Assuming that the (fixed) cost of hospital care for each individual is equal to $k>0$ per unit of time, at time $t$ the total cost $K=K(t)$ is given by 
\begin{equation} \label{eqk}
    K(t) = k \int_0^t H(s) \; ds.  
\end{equation}
Since $\theta=0$ obviously minimizes $K$ at any time $t$, rather than seeking the optimal value of $\theta$, we will instead determine its worst-case value. Specifically, we will find $\Tilde{\theta} \in (0,1]$ that maximizes $K$ at a chosen time $t$. Since studying the quantity $K$ is very complex, and it also depends on the initial conditions, we will divide our analysis into two parts, always supposing that $\mathcal{R}_0 > 1$ (which corresponds to the spread of the epidemic). We will start by analyzing what happens at the EE, since the orbits converge to it. Namely, we will initially focus on how $K(t)$ behaves for long times $t$. Finally, through numerical simulations, we will study the values attained by $K$ after the first wave of the epidemic. 

\subsection{Endemic equilibrium analysis} \label{EEanalysis}

\noindent For long times the trajectories converge to the EE, therefore the worst value for $\theta$ is the one that maximizes $H_2$ defined by (\ref{eqEE}). Recall that the EE exists if and only if $\mathcal{R}_0 > 1$. Due to (\ref{eqEEbis}), we are looking for the maximum value of 
\begin{equation} \label{eqftheta}
    f(\theta) \coloneqq \theta \left(1-\frac{1}{\mathcal{R}_0}\right), 
\end{equation}
for $\theta \in [0,1]$ such that $\mathcal{R}_0 = \mathcal{R}_0 (\theta) > 1$. Fixed the values of $\beta$,  $\gamma_I$, $\gamma_C$, $\gamma_H$, and $\varepsilon$, several situations must be distinguished.  

Suppose that $\gamma_I < \gamma_C < \beta$, in this case $\mathcal{R}_0 (\theta) > 1$ for all $\theta \in [0,1]$. First, notice that $f(0)=0$ and $f(1)>0$. A direct calculation shows that 
\begin{equation*} 
    \frac{df(\theta)}{d\theta} \le 0 \quad \Longleftrightarrow \quad 0 < \frac{\gamma_C}{\gamma_C - \gamma_I} \left(1 - \sqrt{\frac{\gamma_I}{\beta}}\right) \le \theta \le \frac{\gamma_C}{\gamma_C - \gamma_I} \left(1 + \sqrt{\frac{\gamma_I}{\beta}}\right). 
\end{equation*}
Since $\gamma_C \left(1 + \sqrt{\gamma_I/\beta}\right)/(\gamma_C - \gamma_I) > 1$, we are interested only in understanding when \linebreak $\gamma_C \left(1 - \sqrt{\gamma_I/\beta}\right)/(\gamma_C - \gamma_I) < 1$. Indeed, when this happens, the function $f$ reaches its peak for $\theta=\Tilde{\theta} \in (0,1)$. A simple calculation shows that the latter inequality is satisfied if and only if 
\begin{equation} \label{eqbeta}
    \beta < \frac{\gamma_C^2}{\gamma_I}, 
\end{equation}
which is in line with the assumption $\beta>\gamma_C>\gamma_I$. Hence, if (\ref{eqbeta}) is satisfied, then 
\begin{equation} \label{eqtilde}
    \Tilde{\theta} = \frac{\gamma_C}{\gamma_C - \gamma_I} \left(1 - \sqrt{\frac{\gamma_I}{\beta}}\right) \in (0,1), 
\end{equation}
otherwise $\Tilde{\theta}=1$. The maximum possible value of $\Tilde{\theta}$ is $1$, but what is its minimum possible value? It corresponds to $\beta \simeq \gamma_C$, which simplifies (\ref{eqtilde}) as $\sqrt{\gamma_C} / (\sqrt{\gamma_C} + \sqrt{\gamma_I})$. By choosing $\gamma_I$ in the best possible way, namely $\gamma_I = \gamma_C - \delta$, $0<\delta \ll \varepsilon \ll 1$, we get 
\begin{equation*}
    \Tilde{\theta} = \frac{\sqrt{\gamma_C}}{2\sqrt{\gamma_C}-\delta} > \frac{1}{2} \quad \textnormal{and} \quad \Tilde{\theta} \xrightarrow{\gamma_C \to \infty} {\frac{1}{2}}^+. 
\end{equation*}
In conclusion, supposing that $\gamma_I < \gamma_C < \beta$, if (\ref{eqbeta}) is satisfied then $\Tilde{\theta}$ is given by (\ref{eqtilde}) and it is always greater than $1/2$, otherwise $\Tilde{\theta}=1$. Notice that $\Tilde{\theta}$ cannot be arbitrarily close to $1/2$ since $\gamma_C$ cannot be arbitrarily large if we want our model to be biologically relevant. 

Suppose now that $\gamma_I < \beta < \gamma_C$, which implies that $\mathcal{R}_0(\theta) > 1$ and that the EE exists if and only if $\theta<\theta^*$, where $\theta^*$ is given by (see Remark \ref{remstar}) 
\begin{equation} \label{eqthetastar}
    \theta^* = \frac{\frac{1}{\gamma_I}-\frac{1}{\beta}}{\frac{1}{\gamma_I}-\frac{1}{\gamma_C}} \in (0,1). 
\end{equation} 
A simple calculation shows that we always have 
\begin{equation*}
    \frac{\gamma_C}{\gamma_C - \gamma_I} \left(1 - \sqrt{\frac{\gamma_I}{\beta}}\right) < \theta^*, 
\end{equation*}
therefore 
\begin{equation} \label{eqtildebis}
    \Tilde{\theta} = \frac{\gamma_C}{\gamma_C - \gamma_I} \left(1 - \sqrt{\frac{\gamma_I}{\beta}}\right) \in (0,\theta^*) \subset (0,1).  
\end{equation}
Notice that $f(0)=0$ and $f(\theta^*)=0$. Since $\theta^*$ can assume any value in $(0,1)$, $\Tilde{\theta}$ can be very close to zero, but what is its maximum possible value? Like before, we have to choose $\beta \simeq \gamma_C$, which simplifies (\ref{eqtildebis}) as $\sqrt{\gamma_C} / (\sqrt{\gamma_C} + \sqrt{\gamma_I})$, but now $\gamma_I$ must be chosen as small as possible.  However, since we must have $\varepsilon \ll \gamma_I$, the maximum value for $\Tilde{\theta}$ is close, but not arbitrarily close, to $1$. Notice that as $\beta \to \gamma_C^-$ we have $\theta^* \to 1^-$. In conclusion, supposing that $\gamma_I < \beta < \gamma_C$, $\Tilde{\theta}$ is given by (\ref{eqtildebis}) and it is always lower than $\theta^*$ defined by (\ref{eqthetastar}). Moreover, $\Tilde{\theta}$ can assume values arbitrarily close to $0$ and values close to $1$, but not arbitrarily close to it since $\gamma_I$ cannot be arbitrarily small if we want our model to be biologically relevant. 

\begin{remark}
    The only situation not considered was $\beta < \gamma_I < \gamma_C$. In this case, $\mathcal{R}_0(\theta) < 1$ for all $\theta \in [0,1]$, therefore the epidemic is about to end and there is not much point in talking about the costs. Moreover, since the DFE is globally exponentially stable (see Proposition \ref{propexp}), the quantity 
    \begin{equation*}
        K_\infty \coloneqq \lim_{t \to \infty} K(t) 
    \end{equation*}
    is well-defined (i.e., it is finite). 
\end{remark}

The previous analysis allowed to distinguish the following cases (notice that, obviously, the value of $\theta$ that minimizes the number of hospitalizations is also the value that minimizes the number of individuals characterized by a critical course of the disease): 
\begin{enumerate}
    \item if $\beta > \gamma_C^2/\gamma_I$, which implies that $\mathcal{R}_0>1$ for all $\theta$, then the epidemic spreads very rapidly and the hospitalizations cannot help to reduce the number of individuals characterized by a critical course of the disease, even if all infected individuals later become hospitalized and hence not able to spread the disease ($\Tilde{\theta}=1$); 
    \item if $\gamma_C < \beta < \gamma_C^2/\gamma_I$, which again implies that $\mathcal{R}_0>1$ for all $\theta$, then the epidemic spreads rapidly but not as fast as in the previous scenario, therefore the hospitalizations can help to reduce the number of individuals characterized by a critical course of the disease ($\Tilde{\theta}<1$ is given by (\ref{eqtilde})); 
    \item if $\gamma_I < \beta < \gamma_C$ then, if $\theta>\theta^*$ we have $\mathcal{R}_0 < 1$ meaning that there is no epidemic, whereas if $\theta<\theta^*$ we have $\mathcal{R}_0 > 1$ but spread of the epidemic is not fast and, like in the previous case, the hospitalizations can help to reduce the number of individuals characterized by a critical course of the disease ($\Tilde{\theta} < \theta^*$ are respectively given by (\ref{eqtildebis}) and (\ref{eqthetastar}));
    \item if $\beta < \gamma_I$ then $\mathcal{R}_0 < 1$ for all $\theta$, hence there is no epidemic. 
\end{enumerate}
Notice that if $\theta=1$ and $I_0=(1-\theta)(I_0+C_0)$ then $I \equiv 0$ and all individuals will undergo a critical course of the disease. Therefore the model would be equivalent to an SIRS model where all infected are quarantined before being recovered. 

\begin{remark}
    If $\gamma_I=\gamma_C=\gamma$ then $\mathcal{R}_0=\beta/\gamma$, which is independent of $\theta$. Supposing that $\beta>\gamma$, the value $\Tilde{\theta}$ that maximizes $f(\theta)$ is $\Tilde{\theta}=1$. Otherwise, if $\beta < \gamma$, there is no epidemic. These two situations correspond to cases (1) and (4), respectively. Cases (2) and (3) are obviously not possible if $\gamma_I=\gamma_C$.   
\end{remark}

\subsection{Numerical simulations}

\noindent During the first wave of the epidemic the orbits are not close to the EE, hence we cannot rely on the study of the function $f(\theta)$ defined by (\ref{eqftheta}) to understand the behavior of the total hospitalization costs $K$ given by (\ref{eqk}). We therefore performed several numerical tests to understand if the value of $\Tilde{\theta}$ obtained in Subsection \ref{EEanalysis} is a good approximation of the value of $\theta$ that maximizes $K$ after the first wave of the epidemic. Moreover, we would like to understand if the difference between the corresponding two values of $K$ is significant. We divided the numerical simulations into three groups, that correspond to cases (1), (2), and (3) described at the end of Subsection \ref{EEanalysis}. We did not consider case (4) since we would not have an epidemic. We chose the initial conditions following Subsection \ref{subsec1}. For each group we fixed the values of $\beta$, $\gamma_I$, $\gamma_C$, $\gamma_H$, and $\varepsilon$, while we varied the value of $\theta$. Since the duration of the first wave of the epidemic changed as the parameters varied, for each simulation we chose as final time $t_F$ the first value of $t$ such that $S(t)=S_2$, $I(t)<I_2$, and $C(t)<C_2$, i.e., when the first wave was already finished (see Figures \ref{f4}(A) and \ref{f5}(A)). 

\begin{figure}
    \centering
    \begin{subfigure}{0.47\textwidth}
        \centering
        \includegraphics[width=\textwidth]{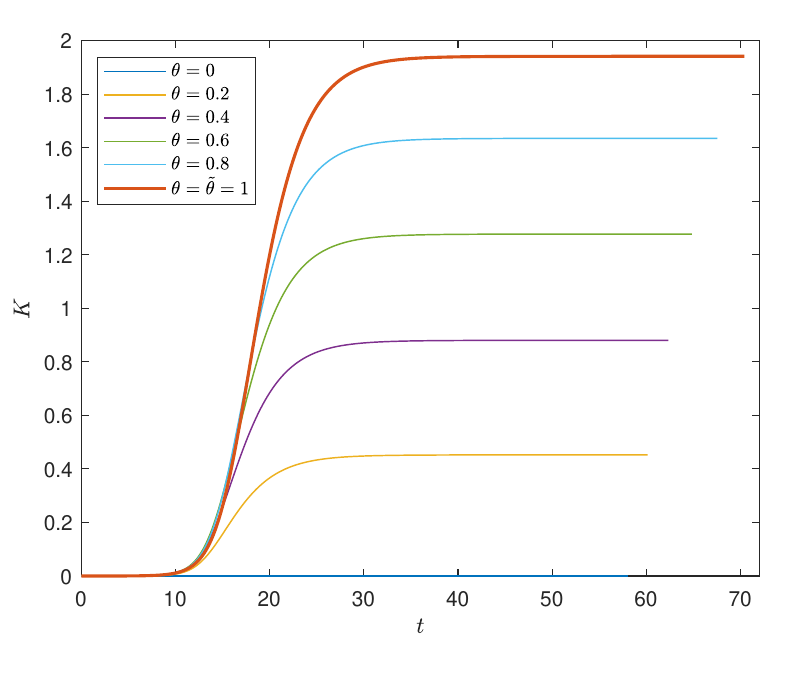}
        \caption{}
    \end{subfigure}
    \hspace{5mm} 
    \begin{subfigure}{0.48\textwidth}
        \centering
        \includegraphics[width=\textwidth]{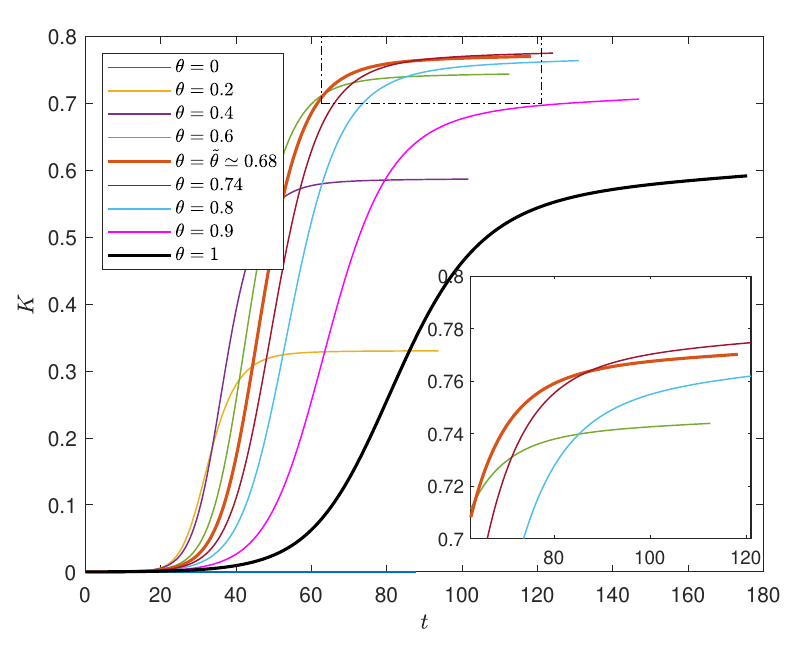}
        \caption{}
    \end{subfigure}
    \vspace{5mm}
    \begin{subfigure}{0.475\textwidth}
        \centering
        \includegraphics[width=\textwidth]{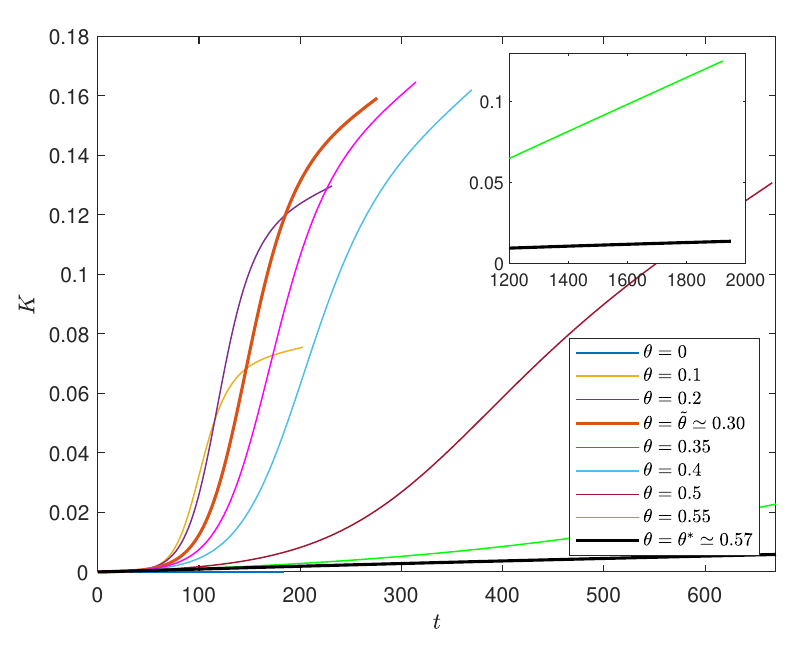}
        \caption{}
    \end{subfigure}
    \caption{Numerical simulations of the evolution of hospitalization costs $K$, as defined by Eq. (\ref{eqk}) with $k=1$, corresponding to cases (1), (2), and (3) described in Subsection \ref{EEanalysis}; each case represents a different epidemic scenario, ranging from the most aggressive (case (1)) to the least aggressive (case (3)). For each case, different values of disease severity $\theta$ are considered. The initial conditions have been chosen according to Subsection \ref{subsec1}, $I_0+C_0=10^{-5}$. (A) Case (1), $\beta=1.5$, $\gamma_I=0.6$, $\gamma_C=0.8$, $\gamma_H=0.4$, and $\varepsilon=0.01$. (B) Case (2), $\beta=1$, $\gamma_I=0.6$, $\gamma_C=0.9$, $\gamma_H=0.4$, and $\varepsilon=0.01$. (C) Case (3), $\beta=0.7$, $\gamma_I=0.6$, $\gamma_C=0.8$, $\gamma_H=0.4$, and $\varepsilon=0.01$.} \label{f6}
\end{figure}

Figure \ref{f6}(A) shows the results of the numerical simulations corresponding to case (1). Notice that the total cost $K$ increases significantly as $\theta$ increases (obviously $K \equiv 0$ if $\theta=0$). The maximum value of $K$ is related to $\theta=\Tilde{\theta}=1$, hence this analysis perfectly matches with the results obtained in Subsection \ref{EEanalysis}, which were valid for long times $t$. Since at the beginning of the epidemic wave the curves intersect with each other several times, it is not possible to tell what is the worst value of $\theta$ for these times $t$.  

Figure \ref{f6}(B) shows the results of the numerical simulations corresponding to case (2). Notice that the total cost $K$ initially increases as $\theta$ increases, reaching its peak after the first wave for $\theta \simeq 0.74$, then for larger values of $\theta$ it decreases. It is worth noticing that not only $0.74$ is not far from $\Tilde{\theta} \simeq 0.68$, but, moreover, the values of $K$ related to $\theta = 0.74$ and $\theta=\Tilde{\theta}$ are almost identical (approximately $0.78$ and $0.77$, respectively). Hence, under a practical point of view, $\Tilde{\theta}$ can be considered a valid approximation of the worst value of $\theta$. On the other hand, the costs $K$ related to $\theta=1$ are much lower than the previous two (approximately $0.59$). Like before, during the wave of the epidemic the curves intersect with each other several times. 

Finally, Figure \ref{f6}(C) shows the results of the numerical simulations corresponding to case (3). We restricted our analysis to $\theta \in [0, \theta^*]$ since for larger values of it we would get $\mathcal{R}_0 < 1$, which implies that there is no epidemic. Notice that if $\theta=\theta^*$, meaning that $\mathcal{R}_0=1$, then the total cost $K$ is very low. This is due to the fact that the orbit was attracted to the DFE, hence there was no epidemic (in this case, $t_F$ was not defined). Numerically, we see that the total cost $K$ initially increases as $\theta$ increases, reaching its peak after the first wave for $\theta \simeq 0.35$, then for larger values of $\theta$ it decreases. Again, $0.35$ is not far from $\Tilde{\theta} \simeq 0.30$, but, in particular, the values of $K$ related to $\theta = 0.35$ and $\theta=\Tilde{\theta}$ are very similar (approximately $0.160$ and $0.164$, respectively). Hence, under a practical point of view, also in this case $\Tilde{\theta}$ can be considered a valid approximation of the worst value of $\theta$. Finally, the costs related to values of $\theta$ close to $\theta^* \simeq 0.57$ are much lower than the previous two (for example, if $\theta=0.55$ then the costs are approximately $0.125$). Again, during the wave of the epidemic the curves intersect with each other several times. 

In conclusion, regardless of the case (1), (2), or (3), the value of $\theta$ that maximizes the hospitalizations costs after the first wave of the epidemic is close to the value $\Tilde{\theta}$ obtained analytically in Subsection \ref{EEanalysis}. In particular, in case (1), they seem to be identical (several other numerical tests, not presented here, supported this hypothesis). 

\begin{remark}
    Notice that, in general, as $\theta$ increases the final time $t_F$ increases. Indeed, if $\theta$ grows then  the basic reproduction number $\mathcal{R}_0$ decreases, therefore the epidemic spreads slower. 
\end{remark}

\subsection{Concluding remarks on $\gamma_I$ and $\gamma_C$}

\noindent Throughout this paper we assumed that $\gamma_I \le\gamma_C$. We made this choice because we supposed that severely infected individuals will require hospital treatment before standard infected individuals are recovered. Moreover, in the Introduction, we explained that our new model can also be interpreted as a scenario where only a fraction $\theta$ of infected individuals develops symptoms and self-quarantines. In this case, as soon as an individual realizes they are infected they should self-quarantine, so we expect this to happen faster than an asymptomatic individual recovers from the disease. But what happens if $\gamma_I > \gamma_C$? This scenario corresponds to a disease from which one can recover quickly, or develop symptoms but not immediately after being infected. All results of Sections \ref{section2} and \ref{section3} hold also in this case, in fact simply reverse $\gamma_I$ and $\gamma_C$ in the calculations. The only exception is Remark \ref{remstar}. Indeed, if $\gamma_I > \gamma_C$ then the basic reproduction number $\mathcal{R}_0$ increases as $\theta$ grows. This implies that, as the disease becomes more dangerous, the spreading speed of the epidemic increases. In particular, the function $f(\theta)$ defined by Eq. (\ref{eqftheta}) becomes an increasing function of $\theta$. Therefore the value of $\theta$ that maximizes the total hospitalization costs $K$, given by Eq. (\ref{eqk}), is always $\Tilde{\theta}=1$. In this scenario it is not possible to reduce the number of hospitalizations with hospitalizations themselves. Indeed, to do this we based ourselves on the fact that hospitalizations were quicker than the recovery times from a standard course of the disease. In such situation, we can only rely on a control strategy to reduce the spread of the disease and the number of critical cases (see, for example, \cite{27,19,18, 25, 33, 26}). 

\section{Conclusions} \label{section5}

\noindent We introduced an SIRS-type model to describe the evolution of contagions supposing that a fraction $\theta$ of infected individuals experiences a severe course of the disease, requiring hospitalization. During hospitalization, these individuals do not contribute to further infections. The model evolves on two different timescales, indeed we supposed that the loss rate of complete immunity to be much smaller than the other rates describing the evolution of the system. 

The basic reproduction number $\mathcal{R}_0$ depends on the parameters $\beta$, $\gamma_I$, $\gamma_C$, and $\theta$. Since we assumed that severely infected individuals require hospital treatment before standard infected individuals are recovered ($\gamma_I \le \gamma_C$), as $\theta$ increases $\mathcal{R}_0$ decreases. This implies that as the disease becomes more dangerous, the spread of the epidemic reduces thanks to hospitalizations. In particular, it is possible that there exists a value $\theta^*$ such that $\mathcal{R}_0>1$ for all $\theta<\theta^*$ (which correspond to a growing epidemic), while $\mathcal{R}_0 < 1$ for all $\theta > \theta^*$ (which correspond to the absence of an epidemic). This property of the basic reproduction number was of fundamental importance when we studied the total cost of the hospitalizations related to a growing epidemic as a function of $\theta$. For long times, we were able to compute the explicit value $\Tilde{\theta}$ that maximized those costs. One could expect that $\Tilde{\theta}=1$, meaning that all infected individuals undergo a critical course of the disease, and hence require hospital treatments. Interestingly, this is not true. We proved that if $\beta > \gamma_C^2/\gamma_I$, which corresponds to an epidemic spreading extremely quickly, then $\Tilde{\theta}=1$. However, if $\beta < \gamma_C^2/\gamma_I$, then $\Tilde{\theta}<1$. This highlights the interesting effect that a severe disease, by necessitating widespread hospitalization, could indirectly suppress transmission and, consequently, reduce hospitalizations. In order to study the total hospitalization costs after the first wave of the epidemic we performed several numerical tests. They all showed that the value of $\theta$ that maximized those costs is close to the value $\Tilde{\theta}$ obtained explicitly. In particular, the corresponding costs are almost identical under a practical point of view. 

From a mathematical point of view, our analysis relied mostly on Geometric Singular Perturbation Theory. When $\mathcal{R}_0>1$, the system can evolve on two different timescales: the fast one describing the waves of the epidemic, and the slow one describing the  slow increment in the number of susceptible individuals between two different waves of the epidemic. However, not always the orbits enter in the slow flow and, in these case, there is not a complete second wave of the epidemic, instead the orbits quickly converge towards the endemic equilibrium. We focused our mathematical analysis on the transition from one timescale to another. In particular, we exploited Fenichel's and Tikhonov's Theorems and the entry-exit function to study the behavior of the orbits during the slow flow.

Finally, we studied what happens if the main assumption of the paper on $\gamma_I$ and $\gamma_C$ is not satisfied, namely if $\gamma_I > \gamma_C$. In this case, we showed that the only difference is that $\mathcal{R}_0$ becomes an increasing function of $\theta$, meaning that we always have $\Tilde{\theta}=1$. In this case it is not possible to reduce the number of hospitalizations with hospitalizations themselves because they are no longer quicker than the recovery times from a standard course of the disease. However, we remarked that our model can also be interpreted as a scenario where only a fraction $\theta$ of infected individuals develops symptoms and self-quarantines. In this situation, it is probably reasonable to assume that symptomatic individuals self-quarantine before asymptomatic ones recover. 

Future developments involve the implementation of control strategies aimed at reducing infections and hospitalizations. For instance, mass vaccination campaigns could be considered for susceptible individuals, while quarantine measures might be applied to those experiencing a standard course of the disease. Specifically, it would be valuable to investigate the effects of these strategies in a scenario such that $\gamma_I > \gamma_C$. In this context, since both the spreading speed of the epidemic and the severity of the disease increase as $\theta$ grows, it would be interesting to determine whether an appropriate control strategy could mitigate these effects.

\bigskip
\bigskip
\noindent \textbf{Acknowledgments.} The author is grateful to Mattia Sensi for the insightful discussion which enriched the paper. This work was performed in the frame of activities sponsored by the University of Parma and by the Italian National Group of Mathematical Physics (GNFM-INdAM). The author also thanks the support of the project PRIN 2022 PNRR ”Mathematical Modelling for a Sustainable Circular Economy in Ecosystems” (project code P2022PSMT7, CUP D53D23018960001) funded by the European Union - NextGenerationEU, PNRR-M4C2-I 1.1, and by MUR-Italian Ministry of Universities and Research.

\bigskip
\nocite{*}
\bibliographystyle{plain}
\bibliography{Bibliography_EE}

\begin{thebibliography}{10}

\bibitem{27}
M.D. Ahmad, M.~Usman, A.~Khan, and M.~Imran.
\newblock Optimal control analysis of {E}bola disease with control strategies of quarantine and vaccination.
\newblock {\em Infect. Dis. Poverty}, 5:72, 2016.
\newblock \url{https://doi.org/10.1186/s40249-016-0161-6}.

\bibitem{9}
V.~Andreasen.
\newblock The effect of age-dependent host mortality on the dynamics of an endemic disease.
\newblock {\em Math. Biosci.}, 114:29--58, 1993.
\newblock \url{https://doi.org/10.1016/0025-5564(93)90041-8}.

\bibitem{22}
H.~Behncke.
\newblock Optimal control of deterministic epidemics.
\newblock {\em Optimal Control Appl. Methods}, 21:269--285, 2000.
\newblock \url{https://doi.org/10.1002/oca.678}.

\bibitem{2}
N.~Berglund.
\newblock Perturbation theory of dynamical systems.
\newblock {\em \texttt{arXiv:math/0111178}}, 2001.
\newblock \url{https://doi.org/10.48550/arXiv.math/0111178}.

\bibitem{19}
L.~Bolzoni, E.~Bonacini, R.~Della~Marca, and M.~Groppi.
\newblock Optimal control of epidemic size and duration with limited resources.
\newblock {\em Math. Biosci.}, 315:108232, 2019.
\newblock \url{https://doi.org/10.1016/j.mbs.2019.108232}.

\bibitem{18}
L.~Bolzoni, E.~Bonacini, C.~Soresina, and M.~Groppi.
\newblock Time-optimal control strategies in {SIR} epidemic models.
\newblock {\em Math. Biosci.}, 292:86--96, 2017.
\newblock \url{https://doi.org/10.1016/j.mbs.2017.07.011}.

\bibitem{13}
R.~Bravo de~la Parra and L.~Sanz-Lorenzo.
\newblock Discrete epidemic models with two time scales.
\newblock {\em Adv. Difference Equ.}, 478:2021, 2021.
\newblock \url{https://doi.org/10.1186/s13662-021-03633-0}.

\bibitem{10}
C.~Castillo-Chavez, D.~Bichara, and B.R. Morin.
\newblock Perspectives on the role of mobility, behavior, and time scales in the spread of diseases.
\newblock {\em Proc. Natl. Acad. Sci. USA}, 113:14582--14588, 2016.
\newblock \url{https://doi.org/10.1073/pnas.1604994113}.

\bibitem{10418977}
J.~Chen, C.~Xia, and M.~Perc.
\newblock The {SIQRS} propagation model with quarantine on simplicial complexes.
\newblock {\em IEEE T. Comput. Soc. Syst.}, 11:4267--4278, 2024.
\newblock \url{https://doi.org/10.1109/TCSS.2024.3351173}.

\bibitem{32}
Y.~Chen and H.~Huang.
\newblock Modeling the impacts of contact tracing on an epidemic with asymptomatic infection.
\newblock {\em Appl. Math. Comput.}, 416:126754, 2022.
\newblock \url{https://doi.org/10.1016/j.amc.2021.126754}.

\bibitem{34}
P.~De~Maesschalck.
\newblock Smoothness of transition maps in singular perturbation problems with one fast variable.
\newblock {\em J. Differential Equations}, 244:1448--1466, 2008.
\newblock \url{https://doi.org/10.1016/j.jde.2007.10.023}.

\bibitem{35}
P.~De~Maesschalck and S.~Schecter.
\newblock The entry–exit function and geometric singular perturbation theory.
\newblock {\em J. Differential Equations}, 260:6697--6715, 2016.
\newblock \url{https://doi.org/10.1016/j.jde.2016.01.008}.

\bibitem{11}
R.~Della~Marca, A.~d’Onofrio, M.~Sensi, and S.~Sottile.
\newblock A geometric analysis of the impact of large but finite switching rates on vaccination evolutionary games.
\newblock {\em Nonlinear Anal. Real World Appl.}, 75:103986, 2024.
\newblock \url{https://doi.org/10.1016/j.nonrwa.2023.103986}.

\bibitem{23}
N.~Fenichel.
\newblock Geometric singular perturbation theory for ordinary differential equations.
\newblock {\em J. Differential Equations}, 31:53--98, 1979.
\newblock \url{https://doi.org/10.1016/0022-0396(79)90152-9}.

\bibitem{30}
G.~Gaeta.
\newblock A simple {SIR} model with a large set of asymptomatic infectives.
\newblock {\em Math. Eng.}, 3:1--39, 2021.
\newblock \url{https://doi.org/10.3934/mine.2021013}.

\bibitem{8}
A.~Goeke and C.~Lax.
\newblock Quasi-steady state reduction for compartmental systems.
\newblock {\em Phys. D}, 327:1--12, 2016.
\newblock \url{https://doi.org/10.1016/j.physd.2016.04.013}.

\bibitem{21}
D.~Greenhalgh.
\newblock Some results on optimal control applied to epidemics.
\newblock {\em Math. Biosci.}, 88:125--158, 1988.
\newblock \url{https://doi.org/10.1016/0025-5564(88)90040-5}.

\bibitem{24}
G.~Hek.
\newblock Geometric singular perturbation theory in biological practice.
\newblock {\em J. Math. Biol.}, 60:347--386, 2010.
\newblock \url{https://doi.org/10.1007/s00285-009-0266-7}.

\bibitem{7}
A.~Holmes, M.~Tildesley, and L.~Dyson.
\newblock Approximating steady state distributions for household structured epidemic models.
\newblock {\em J. Theoret. Biol.}, 534:110974, 2022.
\newblock \url{https://doi.org/10.1016/j.jtbi.2021.110974}.

\bibitem{39}
T.-H. Hsu and S.~Ruan.
\newblock Relaxation oscillations and the entry-exit function in multidimensional slow-fast systems.
\newblock {\em SIAM J. Math. Anal.}, 53:3717--3758, 2021.
\newblock \url{https://doi.org/10.1137/19M1295507}.

\bibitem{3}
H.~Jardón-Kojakhmetov, C.~Kuehn, A.~Pugliese, and M.~Sensi.
\newblock A geometric analysis of the {SIR}, {SIRS} and {SIRWS} epidemiological models.
\newblock {\em Nonlinear Anal. Real World Appl.}, 58:103220, 2021.
\newblock \url{https://doi.org/10.1016/j.nonrwa.2020.103220}.

\bibitem{14}
H.~Jardón-Kojakhmetov, C.~Kuehn, A.~Pugliese, and M.~Sensi.
\newblock A geometric analysis of the {SIRS} epidemiological model on a homogeneous network.
\newblock {\em J. Math. Biol.}, 83:37, 2021.
\newblock \url{https://doi.org/10.1007/s00285-021-01664-5}.

\bibitem{MR4362890}
M.~Jusup, P.~Holme, K.~Kanazawa, M.~Takayasu, I.~Romić, Z.~Wang, S.~Geček, T.~Lipić, B.~Podobnik, L.~Wang, W.~Luo, T.~Klanjšček, J.~Fan, S.~Boccaletti, and M.~Perc.
\newblock Social physics.
\newblock {\em Phys. Rep.}, 948:1--148, 2022.
\newblock \url{https://doi.org/10.1016/j.physrep.2021.10.005}.

\bibitem{36}
P.~Kaklamanos, C.~Kuehn, N.~Popović, and M.~Sensi.
\newblock Entry–exit functions in fast–slow systems with intersecting eigenvalues.
\newblock {\em J. Dyn. Diff. Equat.}, 37:559--–576, 2025.
\newblock \url{https://doi.org/10.1007/s10884-023-10266-2}.

\bibitem{15}
P.~Kaklamanos, A.~Pugliese, M.~Sensi, and S.~Sottile.
\newblock A geometric analysis of the {SIRS} model with secondary infections.
\newblock {\em SIAM J. Appl. Math.}, 84:661--686, 2024.
\newblock \url{https://doi.org/10.1137/23M1565632}.

\bibitem{5}
W.O. Kermack and A.G. McKendrick.
\newblock A contribution to the mathematical theory of epidemics.
\newblock {\em Proc. Roy. Soc. London Ser. A}, 115:700--721, 1927.
\newblock \url{https://doi.org/10.1098/rspa.1927.0118}.

\bibitem{40}
C.~Kuehn.
\newblock {\em Multiple Time Scale Dynamics}, volume 191.
\newblock Springer, Cham, 2015.
\newblock \url{https://doi.org/10.1007/978-3-319-12316-5}.

\bibitem{31}
K.Y. Leung, P.~Trapman, and T.~Britton.
\newblock Who is the infector? {E}pidemic models with symptomatic and asymptomatic cases.
\newblock {\em Math. Biosci.}, 301:190--198, 2018.
\newblock \url{https://doi.org/10.1016/j.mbs.2018.04.002}.

\bibitem{LI2025116273}
Y.~Li, Y.~Yao, M.~Feng, T.P. Benko, M.~Perc, and J.~Završnik.
\newblock Epidemic dynamics in homes and destinations under recurrent mobility patterns.
\newblock {\em Chaos, Solitons \& Fractals}, 195:116273, 2025.
\newblock \url{https://www.sciencedirect.com/science/article/pii/S0960077925002863}.

\bibitem{37}
W.~Liu.
\newblock Exchange lemmas for singular perturbation problems with certain turning points.
\newblock {\em J. Differential Equations}, 167:134--180, 2000.
\newblock \url{https://doi.org/10.1006/jdeq.2000.3778}.

\bibitem{20}
R.~Morton and K.H. Wickwire.
\newblock On the optimal control of a deterministic epidemic.
\newblock {\em Advances in Appl. Probability}, 6:622–--635, 1974.
\newblock \url{https://doi.org/10.2307/1426183}.

\bibitem{25}
V.~Nenchev.
\newblock Optimal quarantine control of an infectious outbreak.
\newblock {\em Chaos, Solitons \& Fractals}, 138:110139, 2020.
\newblock \url{https://doi.org/10.1016/j.chaos.2020.110139}.

\bibitem{29}
J.~Pan, Z.~Chen, Y.~He, T.~Liu, X.~Cheng, J.~Xiao, and H.~Feng.
\newblock Why controlling the asymptomatic infection is important: A modelling study with stability and sensitivity analysis.
\newblock {\em Fractal Fract.}, 6:197, 2022.
\newblock \url{https://doi.org/10.3390/fractalfract6040197}.

\bibitem{6}
A.T. Price-Smith.
\newblock {\em Contagion and Chaos: Disease, Ecology, and National Security in the Era of Globalization}.
\newblock The MIT Press, Cambridge, Massachusetts, 2009.
\newblock \url{https://doi.org/10.7551/mitpress/7390.001.0001}.

\bibitem{16}
P.~Rashkov and B.W. Kooi.
\newblock Complexity of host-vector dynamics in a two-strain dengue model.
\newblock {\em J. Biol. Dyn.}, 15:35--72, 2021.
\newblock \url{https://doi.org/10.1080/17513758.2020.1864038}.

\bibitem{17}
P.~Rashkov, E.~Venturino, M.~Aguiar, N.~Stollenwerk, and B.W. Kooi.
\newblock On the role of vector modeling in a minimalistic epidemic model.
\newblock {\em Math. Biosci. Eng.}, 16:4314--4338, 2019.
\newblock \url{https://doi.org/10.3934/mbe.2019215}.

\bibitem{38}
S.~Schecter.
\newblock Exchange lemmas. {II}. {G}eneral exchange lemma.
\newblock {\em J. Differential Equations}, 245(2):411--441, 2008.
\newblock \url{https://doi.org/10.1016/j.jde.2007.10.021}.

\bibitem{12}
S.~Schecter.
\newblock Geometric singular perturbation theory analysis of an epidemic model with spontaneous human behavioral change.
\newblock {\em J. Math. Biol.}, 82:54, 2021.
\newblock \url{https://doi.org/10.1007/s00285-021-01605-2}.

\bibitem{4}
Z.~Shuai and P.~van~den Driessche.
\newblock Global stability of infectious disease models using {L}yapunov functions.
\newblock {\em SIAM J. Appl. Math.}, 73:1513--1532, 2013.
\newblock \url{https://doi.org/10.1137/120876642}.

\bibitem{33}
A.K. Srivastav and M.~Ghosh.
\newblock Modeling and analysis of the symptomatic and asymptomatic infections of swine flu with optimal control.
\newblock {\em Model. Earth Syst. Environ.}, 2:1--9, 2016.
\newblock \url{https://doi.org/10.1007/s40808-016-0222-7}.

\bibitem{1}
P.~van~den Driessche and J.~Watmough.
\newblock Reproduction numbers and sub-threshold endemic equilibria for compartmental models of disease transmission.
\newblock {\em Math. Biosci.}, 180:29--48, 2002.
\newblock \url{https://doi.org/10.1016/S0025-5564(02)00108-6}.

\bibitem{0}
M.~Wechselberger.
\newblock {\em Geometric singular perturbation theory beyond the standard form}, volume~6.
\newblock Springer, 2020.
\newblock \url{https://link.springer.com/book/10.1007/978-3-030-36399-4}.

\bibitem{26}
X.~Yan and Y.~Zou.
\newblock Optimal and sub-optimal quarantine and isolation control in {SARS} epidemics.
\newblock {\em Math. Comput. Modelling}, 47:235--245, 2007.
\newblock \url{https://doi.org/10.1016/j.mcm.2007.04.003}.

\end{thebibliography}
\bigskip
\bigskip

\setlength\parindent{0pt}

\end{document}